\documentclass[12pt,a4paper]{elsarticle}
\usepackage[utf8]{inputenc}
\usepackage{lineno,amsmath,amssymb,amsthm}
\usepackage{mathtools}
\usepackage{fullpage}
\usepackage{indentfirst}

\newtheorem{theorem}{Theorem}[section]
\newtheorem{lemma}[theorem]{Lemma}
\newtheorem{proposition}[theorem]{Proposition}
\newtheorem{corollary}[theorem]{Corollary}
\newtheorem{remark}[theorem]{Remark}
\newtheorem{example}[theorem]{Example}

\numberwithin{equation}{section}

\DeclareMathOperator{\HS}{H.S}

\makeatletter
\renewcommand{\@cite}[2]{\textnormal{[{#1\if@tempswa, #2\fi}]}}
\makeatother

\newcommand{\HSnorm}[1]{\left\lVert #1 \right\rVert_{\HS}}

\begin{document}
\begin{frontmatter}
\title{Hilbert-Schmidt Norms of Self-Commutator and Cross-Commutator on $N_\eta$-type Quotient Modules in Polydisc}
\author{Yuchao Xiang}
\address{College of Mathematical Science, Chongqing Univeristy of Technology}
\ead{xiangyc@stu.cqut.edu.cn}

\author{Anjian Xu\fnref{myfootnote}}
\address{College of Mathematical Science, Chongqing Univeristy of Technology}
\ead{xuaj@cqut.edu.cn}
\fntext[myfootnote]{Corresponding Author}
\date{}

\begin{abstract}
    Let $\eta_1,\cdots,\eta_d$ be nonconstant one-variable inner functions satisfying $\eta_i(0)=0$ for all $1\leq i\leq d$. Denote by $M_\eta$ the submodule of the Hardy space $H^{2}(\mathbb{D}^{d})$ generated by the function family $\{\eta_i(z_i)-\eta_{i+1}(z_{i+1})\mid 1\leq i\leq d-1\}$. For the corresponding quotient module $N_\eta=H^2(\mathbb{D}^d)\ominus M_\eta$, we define the compressed multiplication operators $S_{z_i}=P_{N_\eta}M_{z_i}\big|_{N_\eta}$, where $P_{N_\eta}$ stands for the orthogonal projection from $H^2(\mathbb{D}^d)$ onto $N_\eta$. In this paper, we characterize the Hilbert–Schmidt property of the self-commutators $[S_{z_i}^*,S_{z_i}]$ and cross-commutators $[S_{z_i}^*,S_{z_j}]$ with $i\neq j$. Moreover, explicit formulas for their Hilbert–Schmidt norms are established whenever these commutators are Hilbert–Schmidt.
\end{abstract}

\begin{keyword}
	Hardy Module \sep $N_\eta$-type Quotient Module \sep Hilbert-Schmidt  \sep Commutator
\end{keyword}

\end{frontmatter}


\section{Introduction}
Let $\mathbb{D}^d$ be the open unit polydisk in $\mathbb{C}^{d}$ and $H^2(\mathbb{D}^d)$ the Hardy space over the polydisk. A closed subspace $M\subset H^2(\mathbb{D}^d)$ invariant under every coordinate multiplication $M_{z_j}$ is called a \emph{submodule}. Its orthogonal complement $N=H^2(\mathbb{D}^d)\ominus M$ is called the \emph{quotient module} associated to $N$, the compressed shift $S_{z_j}$ is the compression of $M_{z_j}$ on $M$, i.e., $S_{z_j}=P_N M_{z_j}|_N$. An quotient module is called essentially reductive if $[S_{z_i},S_{z_j}^{\ast}]$ is compact for any $1\leq i,j\leq d$. Characterizations of submodules, reductivity of quotient modules, properties of $S_{z_j}$ and their interplay are important themes in multivariable operator theory.

In one variable, the famous Beurling's theorem \cite{Beurling} says that every submodule of \(H^2(\mathbb{D})\) was given by $\theta H^2(\mathbb{D})$ for some inner $\theta$. But for $d\ge 2$, there is no such characterization \cite{Rudin}, and it motivates the study of tractable submodules and quotient modules. Among them is the $N_{\phi}$-type module, introduced by Izuchi and Yang~\cite{IzuchiYang08,IzuchiNakaziSeto04}:
$N_{\phi}=H^{2}(\mathbb{D}^2)\ominus[z-\phi(w)]$ for an one-variable inner function $\phi$. They proved that $N_{\phi}$ is essentially reductive if and only if $\phi$ is a finite Blaschke product \cite{IzuchiYang08} (see also \cite{Douglas06}). Another interesting example was studied by Clark \cite{Clark} and Wang \cite{Wang}. Let $\eta_i(z_i)$ be nonconstant one-variable inner functions with $\eta_i(0)=0$, and set
\[M_{\eta}=[\eta_{1}(z_{1})-\eta_{2}(z_{2}),\dots,\eta_{d-1}(z_{d-1})-\eta_{d}(z_{d})],\qquad N_{\eta}=H^2(\mathbb{D}^d)\ominus M_{\eta}.\]
Clark proved that $N_{\eta}$ is essentially normal when every $\eta_j$ is an Blaschke product with finite order $d_{j}$ for $1\leq j\leq d$ and Wang proved the converse. Wang and Zhao \cite{WangZhao} later characterized essentially normal homogeneous quotient modules over the polydisk. The essential normality of submodules and quotient modules is one of central themes in multivariate operator theory, originating in Arveson's program \cite{Arveson02,Arveson05} and developed extensively for the polydisk by Douglas, Misra, Guo, Wang, and so on (to just name a few, one can see \cite{ChenGuo,Douglas06,DouglasMisra08,GuoWang} for further information). Based on these results, one naturally asks further that whether  $[S_{z_i},S_{z_j}^{\ast}]$ is in Schatten $p$ class, what is the Schatten $p$ norm if it is, which provides more refined information than compactness. Zou, Yu and Yang \cite{ZouYuYang} computed the Hilbert-Schmidt norms of the self-commutators and cross-commutators on $N_{\psi,\phi}=H^2(\mathbb{D}^2)\ominus[\psi(z)-\phi(w)]$ over the bidisk, with $\psi,\phi$ nonconstant inner functions and $\psi(0)=0$.

In this paper we extend the above type theorem \cite{ZouYuYang} to $N_{\eta}$-type quotient modules for arbitrary $d\ge2$. Our main result (Theorem \ref{thm:main}) is as follows.
\begin{theorem}
	If $D_i=\prod_{j\ne i}d_j<\infty$, where $d_j=\dim H^{2}(\mathbb{D}^2)\ominus \eta_j H^{2}(\mathbb{D}^d)$, then
	\[\HSnorm{[S_{z_i}^*,S_{z_i}]}^2
	=D_i(d-1)^2\Bigl[\frac{\pi^2}{3}-2\sum_{k=1}^{d-1}\frac{1}{k^2}\Bigr]
	+D_i-2D_i(d-1)\,|\eta_i'(0)|^2,\]
	and $[S_{z_i}^*,S_{z_i}]$ fails to be Hilbert--Schmidt when $D_i=\infty$.
\end{theorem}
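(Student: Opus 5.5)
The plan is to use the standard tensor decomposition of $N_\eta$. Write $K_j=H^2(\mathbb{D})\ominus\eta_jH^2(\mathbb{D})$, so $\dim K_j=d_j$. Since $\eta_j(0)=0$, $\eta_j$ is a pure isometry, and $H^2(\mathbb{D})=\bigoplus_{n\ge0}\eta_j^nK_j$. Consequently
\[H^2(\mathbb{D}^d)=\bigoplus_{n_1,\dots,n_d\ge0}\eta_1^{n_1}\cdots\eta_d^{n_d}\,(K_1\otimes\cdots\otimes K_d).\]
The submodule $M_\eta$ is generated by the differences $\eta_i-\eta_{i+1}$. Hence, on each block $K=K_1\otimes\cdots\otimes K_d$, the quotient $N_\eta$ should be identified with $\bigoplus_{n\ge0}(\text{the }S_d\text{-symmetric part of }\operatorname{span}\{\eta^{\alpha}:|\alpha|=n\})\otimes K$. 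Put differently, $N_\eta\cong H^2_{\rm sym}\otimes K$, where on level $n$ the vector
\[e_n=\binom{n+d-1}{d-1}^{-1/2}\sum_{|\alpha|=n}\eta^\alpha\Big/\|\cdot\|\]
is the normalized symmetric sum. I would first verify that $N_\eta$ is exactly the closed span of $e_n\otimes K$, $n\ge0$. This uses that $\eta^\alpha-\eta^\beta\in M_\eta$ whenever $|\alpha|=|\beta|$, together with orthogonality of distinct monomials $\eta^\alpha$.

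Next, compute $S_{z_i}$ in this model. Decompose $K_i=\mathbb{C}k_0\oplus K_i'$, where $k_0=\sqrt{1-|\eta_i(0)|^2}\,\cdots$. In fact $P_{K_i}(z\,f)$ for $f\in K_i$ splits into a part staying in $K_i$ and a part $\langle zf,\eta_i\rangle$-type term that moves to level $\eta_i K_i$. Concretely, $M_z$ on $H^2=\bigoplus\eta_i^nK_i$ acts as $A\otimes I+B\otimes M_{\eta_i}$, where $A=P_{K_i}M_z|_{K_i}$ is the model operator and $B$ is rank one, namely $B f=\langle f,\,\bar{?}\rangle$ applied to $k=(1-\overline{\eta_i(0)}\eta_i)/z$-type vectors. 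After compressing to the symmetric subspace, the $\eta_i$-shift becomes a weighted shift on $\{e_n\}$ with weights
\[\omega_n=\sqrt{\tfrac{n_i+1}{\,n+d\,}}\text{-averaged},\]
and after averaging over multi-indices this yields $\langle M_{\eta_i}e_n,e_{n+1}\rangle=\sqrt{\frac{n+1}{n+d}}\cdot$const. Then
\[[S_{z_i}^*,S_{z_i}]=\text{(defect of }A\text{, rank one)}\otimes I+\text{(rank one)}\otimes(\text{weighted-shift commutator }T_\omega^*T_\omega-T_\omega T_\omega^*)+\text{cross terms},\]
each tensored with the identity on $\bigotimes_{j\ne i}K_j$. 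This identity factor contributes the multiplicity $D_i$. In particular, if $D_i=\infty$, a nonzero operator tensored with an infinite-dimensional identity is not Hilbert--Schmidt, which gives the second assertion. I need to check that the commutator is nonzero, which follows from the diagonal entry at $n=0$.

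For $D_i<\infty$ I will compute the squared HS norm, which equals $D_i$ times the HS norm on $H^2_{\rm sym}\otimes K_i$. The weighted-shift diagonal is
\[|\omega_n|^2-|\omega_{n-1}|^2=\frac{n+1}{n+d}-\frac{n}{n+d-1}=\frac{d-1}{(n+d)(n+d-1)}.\]
Summing squares gives $(d-1)^2\sum_{m\ge d}\frac{1}{m^2(m-1)^2}$. Using $\frac{1}{m^2(m-1)^2}=\bigl(\frac1{m-1}-\frac1m\bigr)^2=\frac1{(m-1)^2}+\frac1{m^2}-\frac{2}{m(m-1)}$, this telescopes to $\frac{\pi^2}{3}-2\sum_{k=1}^{d-1}k^{-2}-\frac{2}{d-1}$ up to boundary terms. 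That produces the first bracket together with corrections, and these corrections should combine with the $n=0$ terms and the cross terms into $D_i-2D_i(d-1)|\eta_i'(0)|^2$. The factor $|\eta_i'(0)|^2$ enters through $\langle z,\eta_i\rangle=\overline{\eta_i'(0)}$, which is the coefficient of the rank-one coupling between $K_i$ and $\eta_iK_i$.

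The main obstacle is the bookkeeping of the second step. I have to pin down exactly how $S_{z_i}$ mixes the $K_i$-component with the symmetric weighted shift, i.e. compute the compression $P_{N_\eta}(z_i\,e_n\otimes f)$ precisely. This requires the averages $\sum_{|\alpha|=n}\alpha_i$ over multi-indices, and then the cross terms must be checked to produce precisely $-2(d-1)|\eta_i'(0)|^2$ and the constant $1$. I would do this by first treating the case $\eta_i=z$, where $K_i=\mathbb{C}$ and $|\eta_i'(0)|=1$, to calibrate the formula. I would then handle general $\eta_i$ via the rank-one structure of $A^*A-AA^*$ on the model space $K_i$.
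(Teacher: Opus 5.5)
Your framework matches the paper's: the basis $\Theta_s\,e_l^{(d)}(\eta)$, the layers indexed by $s'=(s_j)_{j\ne i}$ giving multiplicity $D_i$, and $S_{\eta_i}$ acting as the weighted shift $B_d$ with weights exactly $\sqrt{(l+1)/(l+d)}$. However, the step that actually produces the formula is missing, and the structure you posit for it is wrong.

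On each layer $S_{z_i}\cong A_i\otimes I+T_i\otimes B_d$ with $T_if=\langle f,\bar z_i\eta_i\rangle\,1$. The relevant vector is $\bar z_i\eta_i\in K_i$, not $(1-\overline{\eta_i(0)}\eta_i)/z$, which here equals $1/z\notin H^2$. Two facts then drive everything:
\begin{itemize}
\item The cross terms vanish, since $A_i^*T_i=0$ (because $A_i^*1=0$) and $T_iA_i^*=0$ (because $A_i(\bar z_i\eta_i)=P_{K_i}\eta_i=0$).
\item The defect identities hold: $I-A_iA_i^*=T_iT_i^*$ is the projection $P_1$ onto $1$, and $I-A_i^*A_i=T_i^*T_i$ is the projection $P_{\bar z_i\eta_i}$ onto $\bar z_i\eta_i$.
\end{itemize}
Hence
\[
[S_{z_i}^*,S_{z_i}]\cong T_iT_i^*\otimes(I-B_dB_d^*)-T_i^*T_i\otimes(I-B_d^*B_d).
\]
Your displayed decomposition (a rank-one defect of $A_i$ tensored with $I$, plus a rank-one operator tensored with $B_d^*B_d-B_dB_d^*$, plus unspecified cross terms) is not this. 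First, $A_i^*A_i-A_iA_i^*=T_iT_i^*-T_i^*T_i$ has rank two unless $\eta_i(z)=cz$. Second, the remainder $T_i^*T_i\otimes B_d^*B_d-T_iT_i^*\otimes B_dB_d^*$ is a single operator tensored with $B_d^*B_d-B_dB_d^*$ only if $T_i^*T_i=T_iT_i^*$, which again forces $\eta_i(z)=cz$.

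As a result, the one series you evaluate,
\[
\HSnorm{B_d^*B_d-B_dB_d^*}^2=(d-1)^2\Bigl[\frac{\pi^2}{3}-2\sum_{k=1}^{d-1}k^{-2}\Bigr]+1-2(d-1),
\]
is the per-layer answer only in your calibration case $|\eta_i'(0)|=1$. Your assertion that the corrections ``should combine'' with $n=0$ terms and cross terms is unsupported.

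With the correct model, on the fiber $K_i\otimes e_l^{(d)}$ the commutator is $a_lP_1-b_lP_{\bar z_i\eta_i}$, where $a_l=\frac{d-1}{l+d-1}$ and $b_l=\frac{d-1}{l+d}$. The two unit vectors are not orthogonal: $|\langle 1,\bar z_i\eta_i\rangle|=|\eta_i'(0)|$. So the squared Hilbert--Schmidt norm of this fiber is
\[
a_l^2+b_l^2-2a_lb_l|\eta_i'(0)|^2=(a_l-b_l)^2+2a_lb_l\bigl(1-|\eta_i'(0)|^2\bigr).
\]
The first part sums over $l$ to your weighted-shift series. The second sums to $2(d-1)(1-|\eta_i'(0)|^2)$ by the telescoping identity $\sum_{l\ge0}a_lb_l=d-1$. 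So the correction comes from the entire series $\sum_l a_lb_l$, not from boundary terms at $n=0$. This gives the per-layer value $(d-1)^2\bigl[\frac{\pi^2}{3}-2\sum_{k=1}^{d-1}k^{-2}\bigr]+1-2(d-1)|\eta_i'(0)|^2$, and multiplying by $D_i$ gives the theorem.

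Your $D_i=\infty$ argument is fine once this model is in place: the diagonal entry at $1\otimes e_0^{(d)}$ is $1-\frac{d-1}{d}|\eta_i'(0)|^2\ge\frac1d$. But as written, the proposal does not establish the formula.
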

We also compute the Hilbert-Schmidt norms of the cross-commutators, it shows that they are independent of the inner functions. Moreover, it is proved that the self-commutators lie in $\mathcal{S}^p$ for every $p>1$ but not in $\mathcal{S}^1$, while the cross-commutators belong to all $\mathcal{S}^p$ for $p>\frac{1}{2}$ and are trace class. It is similar to Arveson's $p$-essential normality \cite{Arveson02,Arveson05}.

The paper is organized as follows. In section 2, an orthonormal basis of $N_\eta$ is constructed and the weighted shift $B_d$ is introduced. In section 3 the main theorem is proved via the layer decomposition and unitary model. Finally, the cross-commutators and their Schatten class membership is discusses in section 4.

\section{Preliminaries}
\subsection{An orthonormal basis of $N_\eta$}
Let \(M_0\) be the diagonal submodule
\[
M_0=[z_{1}-z_{2},z_{2}-z_{3},\ldots,z_{d-1}-z_{d}]
\]
in $H^2(\mathbb{D}^{d})$ and \(N_{0}\) be the associated quotient module $H^2(\mathbb{D}^d)\ominus M_0$. The following lemma gives an orthonormal basis of $N_0$.
\begin{lemma}\label{lem:N0-basis}
    \begin{equation}\label{eq:el-def}
    e_l^{(d)}(z)=\frac{1}{\sqrt{C_{d,l}}}\sum_{|\alpha|=l}z^{\alpha}, l\ge0
    \end{equation}
    is an orthonormal basis of $N_0$, where \(C_{d,l}=\binom{l+d-1}{d-1}\), $\alpha=(\alpha_1,\cdots,\alpha_d)\in\mathbb{N}^{d}$, $|\alpha|=\alpha_1+\cdots+\alpha_d=l$ and $z^{\alpha}=z_1^{\alpha_1}\cdots z_d^{\alpha_d}$.
\end{lemma}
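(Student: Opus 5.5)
The plan is to identify $N_0$ with the closed span of the symmetric sums $E_l=\sum_{|\alpha|=l}z^\alpha$. Orthonormality is then the easy half. For $l\neq l'$, the polynomials $E_l$ and $E_{l'}$ are homogeneous of different degrees, hence orthogonal in $H^2(\mathbb{D}^d)$. Moreover,
\[
\|E_l\|^2=\#\{\alpha\in\mathbb{N}^d:|\alpha|=l\}=\binom{l+d-1}{d-1}=C_{d,l},
\]
because the monomials form an orthonormal basis. So $\{e_l^{(d)}\}$ is orthonormal, and what remains is to show $e_l^{(d)}\in N_0$ and that the family is complete in $N_0$.

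Step 1: $E_l\perp M_0$. Since $M_0$ is the closed span of $z^\beta(z_i-z_{i+1})$ with $\beta\in\mathbb{N}^d$ and $1\le i\le d-1$, it suffices to check
\[
\langle E_l, z^{\beta}(z_i-z_{i+1})\rangle = \langle E_l,z^{\beta+\epsilon_i}\rangle-\langle E_l,z^{\beta+\epsilon_{i+1}}\rangle .
\]
Each inner product equals $1$ if the multi-index has total degree $l$ and $0$ otherwise. Both multi-indices have the same total degree $|\beta|+1$, so the difference is $0$. Hence $e_l^{(d)}\in N_0$.

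Step 2: completeness. Take $f\in N_0$ with $f\perp e_l^{(d)}$ for all $l$, and write $f=\sum_\alpha c_\alpha z^\alpha$. The condition $f\perp z^\beta(z_i-z_{i+1})$ gives $c_{\beta+\epsilon_i}=c_{\beta+\epsilon_{i+1}}$ for every $\beta$ and $i$. So the coefficient is unchanged when one unit is moved between adjacent coordinates. Any two multi-indices of the same length $l$ are connected by a chain of such moves, so $c_\alpha=c_l$ depends only on $|\alpha|$. Then
\[
\langle f,E_l\rangle=C_{d,l}\,c_l=0
\]
forces $c_l=0$ for every $l$, so $f=0$.

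The only point needing care is the connectivity claim in Step 2. I would handle it by induction: from any $\alpha$ with $|\alpha|=l$, repeatedly shift units leftward via adjacent moves until reaching $(l,0,\dots,0)$, which shows all such $\alpha$ are linked. Everything else is a direct coefficient computation.
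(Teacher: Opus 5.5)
Your proposal is correct and follows essentially the same route as the paper: you check orthogonality of $e_l^{(d)}$ against the spanning set $z^\beta(z_i-z_{i+1})$ of $M_0$, then derive $c_{\beta+\epsilon_i}=c_{\beta+\epsilon_{i+1}}$ to force the coefficients of any $f\in N_0$ to depend only on total degree. The differences are minor. You verify orthonormality and the chain-of-moves connectivity explicitly, where the paper leaves both implicit. You also phrase completeness as ``$f\perp$ all $e_l^{(d)}$ implies $f=0$'' rather than expanding $f$ directly in the $e_l^{(d)}$.
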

\begin{proof}
   Firstly, we show $e_l^{(d)}\perp M_0$. For each generator $g_{i}=z_i-z_{i+1}$ and each monomial $z^\alpha$, we have
   \[\langle g_{i} z^\alpha, e_l^{(d)}\rangle
   = \langle z^{\alpha+e_i}-z^{\alpha+e_{i+1}}, e_l^{(d)}\rangle = 0,\]
   since the two terms cancel by symmetry of the symmetrized sum defining $e_l^{(d)}$, where $e_{i}=\overbrace{(0,\cdots,0,1,0\cdots,0)}^{i-th}$. Hence $e_l^{(d)}\perp M_0$.

   Secondly, if $f=\sum_\alpha a_\alpha z^\alpha\in N_0$. Then 
   \[\langle f,(z_i-z_{i+1})z^\beta\rangle=0\text{ for all }i,\beta\] 
   implies 
   \[a_{\beta+e_i}=a_{\beta+e_{i+1}}.\] 
   Thus all coefficients with the same total degree $l$ are equal, i.e., $a_\alpha=a_\beta$ if $|\alpha|=|\beta|$, we assume it is $c_l$ if $|\beta|=l$. Hence
   \[f=\sum_{l\ge0} c_l \sum_{|\alpha|=l}z^\alpha= \sum_{l\ge0} c_l\sqrt{C_{d,l}}\, e_l^{(d)},\]
   so $\{e_l^{(d)}\}$ spans $N_0$. Therefore it is an orthonormal basis.
\end{proof}

Let $\eta_i$ be nonconstant one-variable inner functions satisfying $\eta_i(0)=0$, $i=1,\dots,d$. Define the submodule
\[M_{\eta}=[\eta_1(z_1)-\eta_2(z_2),\eta_2(z_2)-\eta_3(z_3),\dots,\eta_{d-1}(z_{d-1})-\eta_d(z_d)]\]
and its associated quotient module $N_{\eta}=H^2(\mathbb{D}^d)\ominus M_{\eta}$. For each $i$, let

\begin{equation}\label{eq:Ki-def}
   K_i=H^2(z_i)\ominus\eta_i(z_i)H^2(z_i)
\end{equation}
   be the model space of $\eta_i$, and set $d_i=\dim K_i\in\mathbb{N}\cup\{\infty\}$. Since $\eta_i(0)=0$, the constant function $1$ belongs to $K_i$. Take an orthonormal basis $\{\theta_{s_i}^{(i)}(z_i)\}_{s_i=0}^{d_i-1}$ of $K_i$ with $\theta_{0}^{(i)}=1$; for $s_i\ge1$, replacing $\theta_{s_i}^{(i)}$ by $\theta_{s_i}^{(i)}-\theta_{s_i}^{(i)}(0)$ and normalizing ensures $\theta_{s_i}^{(i)}(0)=0$ for $0\leq s_{i}\leq d_{i}-1$.

\begin{theorem}\label{thm:basis}
\begin{equation}\label{eq:Esl-def}
    E_{s,l}= \left(\prod_{i=1}^d \theta_{s_i}^{(i)}(z_i)\right)
    \frac{1}{\sqrt{C_{d,l}}}  \sum_{i_1+\cdots+i_d=l}
    \eta_1(z_1)^{i_1}\cdots\eta_d(z_d)^{i_d},\qquad0\le s_i<d_i,\ l\ge0,
\end{equation}
is an orthonormal basis of $N_{\eta}$, where $C_{d,l}$ as in the lemma.
\end{theorem}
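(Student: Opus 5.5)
The plan is to transport Lemma \ref{lem:N0-basis} along the standard Wold-type decomposition of $H^2$ with respect to an inner function. For each $i$, since $\eta_i$ is inner with $\eta_i(0)=0$, we have the orthogonal decomposition
\[H^2(z_i)=\bigoplus_{k\ge0}\eta_i(z_i)^k K_i .\]
Moreover, multiplication by $\eta_i$ is an isometry mapping $\eta_i^kK_i$ onto $\eta_i^{k+1}K_i$. Hence $\{\theta^{(i)}_{s_i}\eta_i^{k}\}_{s_i,k}$ is an orthonormal basis of $H^2(z_i)$. Taking tensor products, $H^2(\mathbb{D}^d)=\bigotimes_i H^2(z_i)$ has orthonormal basis $\theta_s\,\eta^{\alpha}$, where $\theta_s=\prod_i\theta^{(i)}_{s_i}(z_i)$ and $\eta^\alpha=\prod_i\eta_i(z_i)^{\alpha_i}$. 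Equivalently, there is a unitary
\[U:\bigoplus_{s}H^2(\mathbb{D}^d)\to H^2(\mathbb{D}^d),\qquad U\bigl((f_s)_s\bigr)=\sum_s \theta_s\, f_s(\eta_1(z_1),\dots,\eta_d(z_d)),\]
sending $z^\alpha$ in the $s$-th copy to $\theta_s\eta^\alpha$. Under this unitary, $M_{\eta_i(z_i)}$ corresponds to $\bigoplus_s M_{z_i}$.

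The key step is to show that $U$ carries $\bigoplus_s M_0$ onto $M_\eta$. Then $U$ also carries $\bigoplus_s N_0$ onto $N_\eta$, and the images $U(e^{(d)}_l\text{ in copy }s)=E_{s,l}$ form an orthonormal basis of $N_\eta$ by Lemma \ref{lem:N0-basis}.

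\textbf{Inclusion $U(\bigoplus_s M_0)\subseteq M_\eta$.} The space $M_0$ is the closed span of $(z_i-z_{i+1})z^\beta$. Its image in copy $s$ is $\theta_s(\eta_i-\eta_{i+1})\eta^\beta$, which lies in $M_\eta$ because $M_\eta$ is a submodule containing the generator $\eta_i-\eta_{i+1}$ and is closed under multiplication by bounded analytic functions. Closedness and the continuity of $U$ finish this direction.

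\textbf{Inclusion $M_\eta\subseteq U(\bigoplus_s M_0)$.} Here $M_\eta$ is the closed span of $(\eta_i-\eta_{i+1})z^\gamma$. Expand $z^\gamma=\sum_{s,\alpha}c_{s,\alpha}\theta_s\eta^\alpha$ in $L^2$. Since multiplication by the bounded function $\eta_i-\eta_{i+1}$ is continuous, we get
\[(\eta_i-\eta_{i+1})z^\gamma=\sum c_{s,\alpha}\,\theta_s(\eta_i-\eta_{i+1})\eta^\alpha ,\]
and each term equals $U\bigl((z_i-z_{i+1})z^\alpha \text{ in copy } s\bigr)\in U(\bigoplus_s M_0)$. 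The right side is a closed subspace (the image of a closed subspace under a unitary), so the inclusion follows.

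I expect the main obstacle to be justifying that multiplication by $\eta_j(z_j)$ intertwines correctly with the tensor decomposition in every variable simultaneously. This is also where the normalization $\theta^{(i)}_0=1$ matters only cosmetically; orthonormality of each $\{\theta^{(i)}_{s_i}\}$ is what counts. I would handle it by checking orthonormality of $\{\theta_s\eta^\alpha\}$ directly via Fubini on $\mathbb{T}^d$, which reduces to the one-variable statements. Completeness follows because finite sums of products $f_1(z_1)\cdots f_d(z_d)$ are dense in $H^2(\mathbb{D}^d)$.
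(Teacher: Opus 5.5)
Your proof is correct and follows the paper's route: the Wold basis $\{\theta_s\eta^\alpha\}$, the unitary sending $z^\alpha$ in copy $s$ to $\theta_s\eta^\alpha$ (your $U$ is just $(f_s)_s\mapsto\sum_s U_sf_s$ with the paper's $U_s$), and transport of Lemma~\ref{lem:N0-basis}. Your reverse inclusion $M_\eta\subseteq U(\bigoplus_s M_0)$, obtained by expanding $z^\gamma$ in that basis, also supplies the step the paper only asserts, namely that $U_s(M_0)=M_\eta\cap\bigotimes_i H^2_{s_i}(z_i)$ and that $M_\eta$ splits across the slices.

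One small repair: $\theta_s$ need not be bounded, so in the forward inclusion justify $\theta_s(\eta_i-\eta_{i+1})\eta^\beta\in M_\eta$ via $(\eta_i-\eta_{i+1})H^2(\mathbb{D}^d)\subseteq M_\eta$, which follows from polynomial approximation and boundedness of multiplication by $\eta_i-\eta_{i+1}$, exactly as in your reverse inclusion.
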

\begin{proof}
    By Wold decomposition, for each $1\le i\le d$ we have
    \[H^2(z_i)=\bigoplus_{m=0}^{\infty}\eta_i(z_i)^m K_i .\]
    For every multi-index $s=(s_1,\dots,s_d)$ with $0\le s_i<d_i$, set
    \[H_{s_i}^2(z_i)=\overline{\operatorname{span}}\left\{\theta_{s_i}^{(i)}(z_i)\eta_i(z_i)^m:\ m\ge 0\right\}.\]
    Then
    \[H^2(\mathbb D^d)=\bigoplus_s\left(\bigotimes_{i=1}^d H_{s_i}^2(z_i)\right).\]
    Fix such an index $s$. Define for $m_i\ge 0$
    \[\begin{array}{llll}
    	U_s:&H^2(\mathbb D^d)&\longrightarrow&\bigotimes_{i=1}^d H_{s_i}^2(z_i)\\
    	&z_1^{m_1}\cdots z_d^{m_d}&\mapsto&\prod_{i=1}^d\theta_{s_i}^{(i)}(z_i)\eta_i(z_i)^{m_i}.\
    \end{array}\]  
    Since $\{z_1^{m_1}\cdots z_d^{m_d}\}$ is an orthonormal basis of $H^2(\mathbb D^d)$ and
    \[\left\{\prod_{i=1}^d\theta_{s_i}^{(i)}(z_i)\eta_i(z_i)^{m_i}:\ m_i\ge 0\right\}\]
    is an orthonormal basis of $\bigotimes_{i=1}^d H_{s_i}^2(z_i)$, the map $U_s$ can be extent to an unitary operator.

    We now check the image of $M_0$ under $U_s$. For $1\le i\le d-1$, $U_s$ intertwines multiplication by $z_i-z_{i+1}$ on $H^2(\mathbb D^d)$ with multiplication by $\eta_i(z_i)-\eta_{i+1}(z_{i+1})$ on the target space, up to the fixed factor $\prod_{j=1}^d\theta_{s_j}^{(j)}(z_j)$. Concretely, for any monomial $z_1^{m_1}\cdots z_d^{m_d}$,
    \[U_s\bigl((z_i-z_{i+1})z_1^{m_1}\cdots z_d^{m_d}\bigr)=(\eta_i(z_i)-\eta_{i+1}(z_{i+1}))\prod_{j=1}^d\theta_{s_j}^{(j)}(z_j)\eta_j(z_j)^{m_j}.\]
    This relation extends linearly to all polynomials: for any polynomial $p$,
   \begin{equation}\label{eq:intertwine-poly}
       U_s\bigl((z_i-z_{i+1})p(z)\bigr)
       =(\eta_i(z_i)-\eta_{i+1}(z_{i+1}))\,U_s(p).
    \end{equation}
    Since polynomials are dense in $H^2(\mathbb{D}^d)$ and $U_s$ is unitary (hence bounded), the relation~\eqref{eq:intertwine-poly} holds for all $f\in H^2(\mathbb{D}^d)$ with $(z_i-z_{i+1})f\in H^2(\mathbb{D}^d)$. Consequently, $U_s$ maps the closed ideal generated by $\{z_i-z_{i+1}\}$ onto the restriction to $\bigotimes_i H_{s_i}^2(z_i)$ of the closed ideal generated by $\{\eta_i-\eta_{i+1}\}$, i.e.,
    \[U_s(M_0)=M_\eta\cap\left(\bigotimes_{i=1}^d H_{s_i}^2(z_i)\right).\]
    Since $U_s$ is unitary, it preserves orthogonal complements, so
    \[U_s(N_0)=\left(\bigotimes_{i=1}^d H_{s_i}^2(z_i)\right)\ominus M_\eta.\]
    Applying $U_s$ to the orthonormal basis $\{e_l^{(d)}\}_{l\ge 0}$ of $N_0$ gives
    \[U_s(e_l^{(d)})=\left(\prod_{i=1}^d\theta_{s_i}^{(i)}(z_i)\right)\frac{1}{\sqrt{C_{d,l}}}\sum_{i_1+\cdots+i_d=l}\eta_1(z_1)^{i_1}\cdots\eta_d(z_d)^{i_d}.\]
    Since $U_s$ is unitary, these vectors form an orthonormal basis of
    $\bigl(\bigotimes_{i=1}^d H_{s_i}^2(z_i)\bigr)\ominus M_\eta$.

    Since the slices are mutually orthogonal, we may take the orthogonal direct sum over all admissible multi-indices $s$, we conclude that $\{E_{s,l}\}$ is an orthonormal basis of $N_\eta$.
    When $d=2$, we have $C_{2,l}=l+1$, and the basis reduces to
    \[\theta_{s_1}^{(1)}(z_1)\theta_{s_2}^{(2)}(z_2)\cdot\frac{1}{\sqrt{l+1}}\sum_{i_1+i_2=l}\eta_1(z_1)^{i_1}\eta_2(z_2)^{i_2}.\]
\end{proof}

\subsection{Weighted shifts}\label{subsec:shift}
By symmetry of $e_l^{(d)}$ in all variables, the compressions $P_{N_0}M_{z_i}|_{N_0}$ coincide for $1\le i\le d$. We denote this common operator by $B_d$.

\begin{lemma}\label{lem:Bd}
   $B_d$ is a weighted shift with respect to $\{e_l^{(d)}\}_{l\ge0}$, i.e.,
   \begin{equation}\label{eq:Bd-action}
      B_d\,e_l^{(d)}=\sqrt{\frac{l+1}{l+d}}\;e_{l+1}^{(d)},
   \end{equation}
   and its defect operators are diagonal
   \begin{equation}\label{eq:Bd-defect}
      (I-B_dB_d^*)\,e_l^{(d)}=\frac{d-1}{l+d-1}\,e_l^{(d)},\qquad (I-B_d^*B_d)\,e_l^{(d)}=\frac{d-1}{l+d}\,e_l^{(d)}.
   \end{equation}
\end{lemma}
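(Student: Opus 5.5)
We compute $B_d e_l^{(d)}=P_{N_0}(z_i e_l^{(d)})$ directly from the orthonormal basis of Lemma \ref{lem:N0-basis}. Since $z_i e_l^{(d)}$ is homogeneous of degree $l+1$, and $N_0$ together with $M_0$ is spanned by homogeneous pieces, its projection onto $N_0$ lies in the span of $e_{l+1}^{(d)}$. Hence
\[
B_d e_l^{(d)}=\langle z_i e_l^{(d)},e_{l+1}^{(d)}\rangle\, e_{l+1}^{(d)}.
\]
To evaluate the coefficient, note that $z_i\sum_{|\alpha|=l}z^\alpha=\sum_{|\beta|=l+1,\ \beta_i\ge1}z^\beta$. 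Its inner product with $\sum_{|\beta|=l+1}z^\beta$ therefore counts the multi-indices $\beta$ with $|\beta|=l+1$ and $\beta_i\ge 1$. There are $C_{d,l}$ of these, via $\beta\mapsto\beta-e_i$. Consequently
\[
\langle z_ie_l^{(d)},e_{l+1}^{(d)}\rangle=\frac{C_{d,l}}{\sqrt{C_{d,l}C_{d,l+1}}}=\sqrt{\frac{C_{d,l}}{C_{d,l+1}}}.
\]
The binomial identity $C_{d,l+1}/C_{d,l}=\binom{l+d}{d-1}\big/\binom{l+d-1}{d-1}=\frac{l+d}{l+1}$ then gives \eqref{eq:Bd-action}.

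For the defect operators, the weighted shift structure gives $B_d^*e_{l+1}^{(d)}=w_l e_l^{(d)}$ and $B_d^*e_0^{(d)}=0$, where $w_l=\sqrt{(l+1)/(l+d)}$. This yields two diagonal formulas. First, $B_d^*B_d e_l^{(d)}=w_l^2 e_l^{(d)}$, so
\[
(I-B_d^*B_d)e_l^{(d)}=\Bigl(1-\frac{l+1}{l+d}\Bigr)e_l^{(d)}=\frac{d-1}{l+d}\,e_l^{(d)}.
\]
Second, $B_dB_d^*e_l^{(d)}=w_{l-1}^2e_l^{(d)}=\frac{l}{l+d-1}e_l^{(d)}$ for $l\ge1$, which gives the factor $\frac{d-1}{l+d-1}$. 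For $l=0$ we have $B_d^*e_0^{(d)}=0$, and the formula still gives $\frac{d-1}{d-1}=1$, so it holds there too.

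The only step needing care is the first one: justifying that $P_{N_0}$ preserves the degree-$(l+1)$ homogeneous component. This holds because $M_0$ is generated by homogeneous polynomials and is therefore invariant under the gauge action $f(z)\mapsto f(\lambda z)$ for $|\lambda|=1$. Its orthogonal projection commutes with this action and so preserves the homogeneous subspaces. The degree-$(l+1)$ part of $N_0$ is spanned by $e_{l+1}^{(d)}$. Equivalently, one can expand $z_ie_l^{(d)}$ in the basis $\{e_k^{(d)}\}$ and observe that only $k=l+1$ gives a nonzero inner product.
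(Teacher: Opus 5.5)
Your proof is correct and follows essentially the same route as the paper: you count multi-indices to get $\langle z_ie_l^{(d)},e_{l+1}^{(d)}\rangle=\sqrt{C_{d,l}/C_{d,l+1}}$, and you read off the defect formulas from the diagonal actions of $B_d^*B_d$ and $B_dB_d^*$, including the check at $l=0$. You also justify the homogeneity step, which the paper only asserts; the simplest version is $P_{N_0}f=\sum_k\langle f,e_k^{(d)}\rangle e_k^{(d)}$ together with the orthogonality of homogeneous polynomials of different degrees.
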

\begin{proof}
    The coefficient $\langle M_{z_i}e_l^{(d)},e_{l+1}^{(d)}\rangle$ is independent of $i$ by symmetry. A direct computation gives
    \[\langle z_i e_l^{(d)},\,e_{l+1}^{(d)}\rangle=\frac{1}{\sqrt{C_{d,l}\,C_{d,l+1}}}\cdot C_{d,l}=\sqrt{\frac{C_{d,l}}{C_{d,l+1}}}=\sqrt{\frac{l+1}{l+d}},\]
    using $C_{d,l+1}=\frac{l+d}{l+1}C_{d,l}$. Since $z_i e_l^{(d)}$ is homogeneous of degree $l+1$, its projection of this vector onto $N_0$ is a constant multiple of $e_{l+1}^{(d)}$, giving \eqref{eq:Bd-action}.

    The adjoint acts by
    \[B_d^* e_l^{(d)}=\sqrt{\frac{l}{l+d-1}}\;e_{l-1}^{(d)}\enspace(l\ge1),\qquad B_d^* e_0^{(d)}=0.\]
    Hence
    \[B_d^*B_d e_l^{(d)}=\frac{l+1}{l+d}\,e_l^{(d)}\enspace(l\ge0),\qquad B_dB_d^* e_l^{(d)}=\frac{l}{l+d-1}\,e_l^{(d)}\enspace(l\ge1),\qquad B_dB_d^* e_0^{(d)}=0.\]
    The formula for $B_dB_d^*$ also holds at $l=0$. Therefore
    \[(I-B_dB_d^*)e_l^{(d)}=\Bigl(1-\frac{l}{l+d-1}\Bigr)e_l^{(d)}=\frac{d-1}{l+d-1}\,e_l^{(d)},\]
    \[(I-B_d^*B_d)e_l^{(d)}=\Bigl(1-\frac{l+1}{l+d}\Bigr)e_l^{(d)}=\frac{d-1}{l+d}\,e_l^{(d)}.\]
\end{proof}

For $1\le i\le d$, let $S_{\eta_i}=P_{N_\eta}M_{\eta_i}|_{N_\eta}$ be the compression of multiplication by $\eta_i(z_i)$. The following lemma shows that each $S_{\eta_i}$ is a weighted shift with respect to the orthonormal basis \(\{E_{s,l}\}\).

\begin{lemma}\label{lem:Seta}
	For every $1\le i\le d$,
	\begin{equation}\label{eq:Seta-action}
		S_{\eta_i} E_{s,l}=\sqrt{\frac{C_{d,l}}{C_{d,l+1}}}\, E_{s,l+1}=\sqrt{\frac{l+1}{l+d}}\, E_{s,l+1}.
	\end{equation}
\end{lemma}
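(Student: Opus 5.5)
The plan is to transport the computation to the diagonal model $N_0$ through the unitaries $U_s$ from the proof of Theorem \ref{thm:basis}, and then read off the answer from Lemma \ref{lem:Bd}. Fix an admissible multi-index $s$ and write $H_s=\bigotimes_{i=1}^d H_{s_i}^2(z_i)$. By construction, $U_s$ sends the monomial $z^m$ to $\prod_j\theta_{s_j}^{(j)}\eta_j^{m_j}$. Hence
\[
U_sM_{z_i}=M_{\eta_i(z_i)}U_s \qquad\text{on } H^2(\mathbb D^d),
\]
because multiplying $\prod_j\theta_{s_j}^{(j)}\eta_j^{m_j}$ by $\eta_i$ just raises $m_i$ by one. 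In particular $M_{\eta_i}$ leaves $H_s$ invariant, and its restriction to $H_s$ is unitarily equivalent, via $U_s$, to $M_{z_i}$ on $H^2(\mathbb D^d)$.

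The key step is to show that $P_{N_\eta}$ commutes with the decomposition $H^2(\mathbb D^d)=\bigoplus_s H_s$. We have $N_\eta=\bigoplus_s U_s(N_0)$, where each summand $U_s(N_0)=H_s\ominus M_\eta$ lies in $H_s$; this was established in the proof of Theorem \ref{thm:basis}. It follows that $P_{N_\eta}$ restricted to $H_s$ equals $P_{U_s(N_0)}$, which in turn equals $U_sP_{N_0}U_s^*$ on $H_s$. To justify the first equality, take $h\in H_s$. Then $h$ is orthogonal to $U_t(N_0)\subset H_t$ for every $t\ne s$, because the spaces $H_t$ are mutually orthogonal. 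So projecting $h$ onto $N_\eta$ only picks up its component in $U_s(N_0)$.

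Now take $f=E_{s,l}=U_se_l^{(d)}$. Since $\eta_iE_{s,l}=U_s(z_ie_l^{(d)})\in H_s$, we get
\[
S_{\eta_i}E_{s,l}=P_{N_\eta}\eta_iE_{s,l}=U_sP_{N_0}z_ie_l^{(d)}=U_sB_de_l^{(d)}.
\]
By \eqref{eq:Bd-action} the right-hand side is $\sqrt{\tfrac{l+1}{l+d}}\,U_se_{l+1}^{(d)}$, which is $\sqrt{\tfrac{l+1}{l+d}}\,E_{s,l+1}$. The identity $C_{d,l}/C_{d,l+1}=(l+1)/(l+d)$ was already recorded in the proof of Lemma \ref{lem:Bd}, so both expressions in \eqref{eq:Seta-action} agree.

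The main obstacle is the slice-compatibility of $P_{N_\eta}$. It depends on the identity $U_s(N_0)=H_s\ominus M_\eta$ and on $M_\eta$ splitting along the slices, that is, $M_\eta=\bigoplus_s(M_\eta\cap H_s)$. I would take this splitting from the proof of Theorem \ref{thm:basis}. To double-check it, note that $M_\eta$ is the closed span of products $(\eta_i-\eta_{i+1})\cdot z^\beta\prod_j\theta_{s_j}^{(j)}\eta_j^{m_j}$. Multiplying by $z_k$ does not preserve a slice. However, $M_\eta$ is also generated as an $H^\infty$-module by $(\eta_i-\eta_{i+1})\prod_j\theta_{s_j}^{(j)}\eta_j^{m_j}$ times elements of $H^2$. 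Its orthogonal complement is precisely $\bigoplus_s(H_s\ominus M_\eta)$, and this is exactly what Theorem \ref{thm:basis} asserts.

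If that route stalls, there is a direct fallback. Compute $\langle \eta_iE_{s,l},E_{t,k}\rangle$ by expanding both functions in the orthonormal products $\prod_j\theta_{t_j}^{(j)}\eta_j^{n_j}$. The inner product vanishes unless $t=s$ and $k=l+1$, in which case it equals $C_{d,l}/\sqrt{C_{d,l}C_{d,l+1}}$. Since $\{E_{t,k}\}$ is an orthonormal basis of $N_\eta$ by Theorem \ref{thm:basis}, this yields \eqref{eq:Seta-action} directly.
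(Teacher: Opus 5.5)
Your proof is correct but takes a different route from the paper's; your fallback is essentially the paper's argument. The paper uses the slice decomposition only to place $P_{N_\eta}(\eta_iE_{s,l})$ in $\overline{\operatorname{span}}\{E_{s,a}:a\ge0\}$. It then discards $a\neq l+1$ by a total-$\eta$-degree argument and evaluates $\langle\eta_iE_{s,l},E_{s,l+1}\rangle$ by counting the $C_{d,l}$ matching multi-indices. In effect it redoes, in the $\eta$-variables, the computation already done for $B_d$ in Lemma \ref{lem:Bd}.

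Your main route turns the slice decomposition into an operator identity. From $U_sM_{z_i}=M_{\eta_i}U_s$ and $P_{N_\eta}|_{H_s}=U_sP_{N_0}U_s^*|_{H_s}$ you get $S_{\eta_i}U_s=U_sB_d$ on $N_0$. The lemma then follows from Lemma \ref{lem:Bd} with no new computation. You also get the stronger fact that $(S_{\eta_1},\dots,S_{\eta_d})$ is unitarily equivalent to a direct sum of copies of $(B_d,\dots,B_d)$, which is the form used later in \eqref{eq:Seta-model}. Both proofs rest on the same input: $N_\eta=\bigoplus_s(H_s\ominus M_\eta)$ with $H_s\ominus M_\eta=U_s(N_0)$.

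Your ``double-check'' of that input is not an argument as written. The $H^\infty$-module sentence is garbled, and the paragraph ends by invoking the very theorem it is meant to check. Here is a clean justification:
\begin{enumerate}
\item Each $M_{\eta_k}$ maps every $H_t$ into itself, and the $H_t$ are mutually orthogonal with closed sum $H^2(\mathbb D^d)$. So each $H_s$ reduces every $M_{\eta_k}$.
\item Hence $P_{H_s}$ commutes with multiplication by $\eta_k-\eta_{k+1}$, and therefore $P_{H_s}M_\eta\subseteq M_\eta$.
\item This gives $M_\eta=\bigoplus_s(M_\eta\cap H_s)$ with $M_\eta\cap H_s=\overline{\sum_k(\eta_k-\eta_{k+1})H_s}=U_s(M_0)$.
\end{enumerate}
This is exactly the splitting that both your proof and the paper's use.
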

\begin{proof}
	Multiplication by $\eta_i(z_i)$ sends $\theta^{(i)}_{s_i}\eta_i^m$ to $\theta^{(i)}_{s_i}\eta_i^{m+1}$, hence it maps each summand $H^2_{s_i}(z_i)=\overline{\operatorname{span}}\{\theta^{(i)}_{s_i}\eta_i^m:m\ge0\}$ into itself. Consequently $\eta_i E_{s,l}$ lies in the slice $\mathcal H_s=\bigotimes_{j=1}^d H^2_{s_j}(z_j)$, and
	\[\eta_i E_{s,l}=\left(\prod_{j=1}^d\theta^{(j)}_{s_j}(z_j)\right)\frac{1}{\sqrt{C_{d,l}}}\sum_{k_1+\cdots+k_d=l}\eta_1(z_1)^{k_1}\cdots\eta_i(z_i)^{k_i+1}\cdots\eta_d(z_d)^{k_d}.\]
	The slices $\{\mathcal H_s\}_s$ are mutually orthogonal, and
	\[N_\eta=\bigoplus_s(\mathcal H_s\ominus M_\eta),\qquad\mathcal H_s\cap N_\eta=\overline{\operatorname{span}}\{E_{s,a}:a\ge0\}.\]
	Thus $P_{N_\eta}(\eta_i E_{s,l})$ belongs to $\overline{\operatorname{span}}\{E_{s,a}:a\ge0\}$, so
	\[S_{\eta_i}E_{s,l}=\sum_{a\ge0}\langle \eta_i E_{s,l},\,E_{s,a}\rangle\,E_{s,a}.\]
	
	Since in $\mathcal H_s$, the family $\{\prod_j\theta^{(j)}_{s_j}\eta_j^{m_j}\}$ is orthonormal, each $E_{s,a}$ is a linear combination of $\prod_j\theta^{(j)}_{s_j}\eta_j^{m_j}$ with $\sum_j m_j=a$, hence is homogeneous of total $\eta$-degree $a$. Since $\eta_i E_{s,l}$ has $\eta$-degree $l+1$, so
	\[\langle \eta_i E_{s,l},\,E_{s,a}\rangle=0\qquad(a\neq l+1),\]
	and therefore
	\[S_{\eta_i}E_{s,l}=\langle \eta_i E_{s,l},\,E_{s,l+1}\rangle\,E_{s,l+1}.\]
	
	It remains to find the coefficient. Write
	\[E_{s,l+1}=\left(\prod_{j=1}^d\theta^{(j)}_{s_j}(z_j)\right)\frac{1}{\sqrt{C_{d,l+1}}}\sum_{p_1+\cdots+p_d=l+1}\eta_1(z_1)^{p_1}\cdots\eta_d(z_d)^{p_d}.\]
	Since $\prod_j\theta^{(j)}_{s_j}$ has norm $1$ and $\{\eta_1^{a_1}\cdots\eta_d^{a_d}\}$ is orthonormal,
	\[\langle \eta_i E_{s,l},\,E_{s,l+1}\rangle=\frac{1}{\sqrt{C_{d,l}\,C_{d,l+1}}}\sum_{\substack{k_1+\cdots+k_d=l\\ p_1+\cdots+p_d=l+1}}\bigl\langle\eta_i^{k_i+1}\prod_{j\neq i}\eta_j^{k_j},\;\eta_1^{p_1}\cdots\eta_d^{p_d}\bigr\rangle.\]
	The inner product is $1$ if \[\begin{cases}
		p_j = k_j + 1 & j=i\\
		p_j = k_j & j\neq i
	\end{cases},\] and $0$ otherwise. Thus each of the $C_{d,l}$ multi-indices $(k_1,\dots,k_d)$ summing to $l$ contributes $1$, yielding
	\[\langle \eta_i E_{s,l},\,E_{s,l+1}\rangle=\frac{C_{d,l}}{\sqrt{C_{d,l}\,C_{d,l+1}}}=\sqrt{\frac{C_{d,l}}{C_{d,l+1}}}=\sqrt{\frac{l+1}{l+d}}.\]
	So
	\[S_{\eta_i}E_{s,l}=\sqrt{\frac{C_{d,l}}{C_{d,l+1}}}\,E_{s,l+1}=\sqrt{\frac{l+1}{l+d}}\,E_{s,l+1}.\] 
	The expression is independent of $i$, so all $S_{\eta_i}$ coincide.
\end{proof}

\section{The Hilbert-Schmidt norm of self-commutators}
Fix $1\le i\le d$ and let $S_{z_i}=P_{N_\eta}M_{z_i}|_{N_\eta}$ be the compressed coordinate multipliers on $N_\eta$. Set
\[D_i=\prod_{j\neq i}d_j\in\mathbb N\cup\{\infty\}.\]

\subsection{Action of $S_{z_i}$ on the basis vectors}
We begin by computing the action of $S_{z_i}$ directly on the orthonormal basis $\{E_{s,l}\}$. This formula will serve as the foundation for the layer decomposition. Let $A_i:K_i\to K_i$ be the Jordan block of the model space $K_i=H^2(z_i)\ominus\eta_i H^2(z_i)$, that is, the compression $A_i f=P_{K_i}(z_i f)$. Define the rank-one operator
\[T_i:K_i\to K_i,\qquad T_i f=\langle f,\ \bar z_i\eta_i\rangle\,1 .\]
Since $\eta_i(0)=0$, the function $\eta_i$ factors as $\eta_i(z_i)=z_i g(z_i)$ with $g\in H^2(\mathbb{D})$, hence $\bar z_i\eta_i=g\in H^2(\mathbb{D})$. For any $h\in H^2(\mathbb{D})$,
\[\langle \bar z_i\eta_i,\,\eta_i h\rangle
=\langle g,\,\eta_i h\rangle
=\langle z_i g h,\,1\rangle=0,\]
so $\bar z_i\eta_i\in K_i$. Because $\eta_i$ is inner function, $\|\bar z_i\eta_i\|=\|\eta_i\|=1$. The defect operator identities
\begin{equation}\label{eq:defect-model}
I-A_i^*A_i=T_i^*T_i,\qquad I-A_iA_i^*=T_iT_i^*
\end{equation}
hold on $K_i$. These are standard for model spaces with $\eta_i(0)=0$: the backward shift $A_i^*$ satisfies $A_i^*1=0$, and $\operatorname{ran}T_i=\mathbb{C}\cdot1$, giving $A_i^*T_i=0$; a direct computation shows $\|T_i^*f\|^2=\|f\|^2-\|A_i^*f\|^2$ for all $f\in K_i$, which is equivalent to~\eqref{eq:defect-model}.

Given a multi-index $s=(s_1,\dots,s_d)$, let
\[\Theta_s(z)=\prod_{j=1}^d\theta_{s_j}^{(j)}(z_j),\qquad
e_l^{(d)}(\eta;z)=\frac{1}{\sqrt{C_{d,l}}}\sum_{|\alpha|=l}\eta_1(z_1)^{\alpha_1}\cdots\eta_d(z_d)^{\alpha_d},\]
and so $E_{s,l}=\Theta_s\cdot e_l^{(d)}(\eta;\cdot)$.

\begin{proposition}\label{prop:Sz-action}
   For every basis vector $E_{s,l}$,
   \begin{equation}\label{eq:Sz-basis}
      S_{z_i}E_{s,l}=(A_i\theta_{s_i}^{(i)})\prod_{j\neq i}\theta_{s_j}^{(j)}\,e_l^{(d)}(\eta)+\langle z_i\theta_{s_i}^{(i)},\,\eta_i\rangle\,\eta_i{\prod_{j\neq i}\theta_{s_j}^{(j)}\,e_l^{(d)}(\eta)}.
   \end{equation}
\end{proposition}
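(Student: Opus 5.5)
The plan is to compute $z_iE_{s,l}$ exactly in $H^2(\mathbb D^d)$ and then compress onto $N_\eta$ term by term. The key one-variable fact is the decomposition of $z_i\theta$ for $\theta=\theta^{(i)}_{s_i}\in K_i$ along $H^2(z_i)=K_i\oplus\eta_iH^2(z_i)$:
\[
z_i\theta=A_i\theta+\langle z_i\theta,\eta_i\rangle\,\eta_i .
\]

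To prove this, write $z_i\theta=P_{K_i}(z_i\theta)+P_{\eta_iH^2}(z_i\theta)$. By definition $P_{K_i}(z_i\theta)=A_i\theta$. For the other piece,
\[
P_{\eta_iH^2}(z_i\theta)=\eta_i\,P_+(\bar\eta_i z_i\theta).
\]
Since $\theta\perp\eta_iH^2$, the function $\bar\eta_i\theta$ lies in $\overline{z_iH^2}$. Hence $\bar\eta_i z_i\theta\in\overline{H^2}$, and its Riesz projection $P_+$ is only the constant term $\langle \bar\eta_i z_i\theta,1\rangle=\langle z_i\theta,\eta_i\rangle$. This gives the displayed identity. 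It is the only place where the model-space structure, together with $\eta_i(0)=0$ (which guarantees $1\in K_i$), is used.

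Next, multiply the identity by $\prod_{j\ne i}\theta^{(j)}_{s_j}(z_j)\,e^{(d)}_l(\eta;z)$. Since $E_{s,l}=\Theta_s\,e^{(d)}_l(\eta)$, this gives
\[
z_iE_{s,l}=(A_i\theta^{(i)}_{s_i})\prod_{j\ne i}\theta^{(j)}_{s_j}\,e^{(d)}_l(\eta)+\langle z_i\theta^{(i)}_{s_i},\eta_i\rangle\,\eta_i\prod_{j\ne i}\theta^{(j)}_{s_j}\,e^{(d)}_l(\eta).
\]
For the first term, expand $A_i\theta^{(i)}_{s_i}=\sum_t\langle A_i\theta^{(i)}_{s_i},\theta^{(i)}_t\rangle\theta^{(i)}_t$ in the basis of $K_i$. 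This makes the first term an $\ell^2$-combination of the basis vectors $E_{s',l}$, where $s'$ equals $s$ except that $s'_i=t$. By Theorem~\ref{thm:basis}, each such vector lies in $N_\eta$, so $P_{N_\eta}$ fixes the first term.

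The second term is exactly $\langle z_i\theta^{(i)}_{s_i},\eta_i\rangle\,\eta_iE_{s,l}$. Its compression is therefore $\langle z_i\theta^{(i)}_{s_i},\eta_i\rangle S_{\eta_i}E_{s,l}$, which by Lemma~\ref{lem:Seta} equals
\[
\langle z_i\theta^{(i)}_{s_i},\eta_i\rangle\sqrt{\tfrac{l+1}{l+d}}\,E_{s,l+1}.
\]
Applying $P_{N_\eta}$ to the expansion of $z_iE_{s,l}$ yields the formula \eqref{eq:Sz-basis}.

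The main obstacle is the interpretation of the second summand. Read literally as a function, $\eta_i\Theta_s e^{(d)}_l(\eta)$ does not lie in $N_\eta$, so \eqref{eq:Sz-basis} must be understood with the second term compressed, i.e.\ as $P_{N_\eta}$ applied to it. I would state this explicitly and record the compressed form $\sqrt{(l+1)/(l+d)}\,E_{s,l+1}$ from Lemma~\ref{lem:Seta}, since that is the form the layer decomposition will use. The remaining care is routine: justify that multiplication by the bounded function $\prod_{j\ne i}\theta_{s_j}^{(j)}e_l^{(d)}(\eta)$ respects the decomposition. This holds because all factors live in separate variables, so everything happens inside the slice tensor structure of Theorem~\ref{thm:basis}.
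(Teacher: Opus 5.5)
You correctly prove the splitting $z_i\theta=A_i\theta+\langle z_i\theta,\eta_i\rangle\eta_i$ and the identity for $z_iE_{s,l}$. These match the paper's proof, which stops at exactly that identity. Two further remarks of yours are also correct and go beyond what the paper writes: the second summand of \eqref{eq:Sz-basis} is not in $N_\eta$, and the first summand is an $\ell^2$-combination of vectors $E_{(\dots,t,\dots),l}\in N_\eta$.

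The compression step, however, contains a genuine error: the second term is \emph{not} $\langle z_i\theta^{(i)}_{s_i},\eta_i\rangle\,\eta_iE_{s,l}$. After the splitting, the factor $\theta^{(i)}_{s_i}$ is replaced by $\eta_i$, not by $\theta^{(i)}_{s_i}\eta_i$. So the second term is $\langle z_i\theta^{(i)}_{s_i},\eta_i\rangle\,\eta_iE_{\hat s,l}$, where $\hat s_k=s_k$ for $k\neq i$ and $\hat s_i=0$. This is exactly where $\theta^{(i)}_0=1$ (i.e.\ $\eta_i(0)=0$) is used. Two quick checks show the problem. First, your expression is quadratic in $\theta^{(i)}_{s_i}$, while $z_iE_{s,l}$ is linear in it. Second, take $d=2$, $\eta_1(z)=z^2$, $\eta_2(z)=z$, $\theta^{(1)}_1=z_1$: then $z_1E_{(1,0),0}=z_1^2=\eta_1$, whereas $\eta_1E_{(1,0),0}=z_1^3$.

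By Lemma~\ref{lem:Seta}, the compression of the second term is therefore $\langle z_i\theta^{(i)}_{s_i},\eta_i\rangle\sqrt{\tfrac{l+1}{l+d}}\,E_{\hat s,l+1}$, not a multiple of $E_{s,l+1}$. You say your form is the one the layer decomposition will use, so this matters. Your version gives the model $A_i\otimes I+\Delta\otimes B_d$, with $\Delta\theta_t=\langle z_i\theta_t,\eta_i\rangle\theta_t$ diagonal in the chosen basis, hence basis-dependent. The correct form gives $T_i\otimes B_d$, with $T_i$ the rank-one map onto $\mathbb C\cdot 1$, as in Proposition~\ref{prop:model}. The cancellations $A_i^*T_i=0$ and $T_iA_i^*=0$ in Proposition~\ref{prop:commutator} rely on that range. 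A minor point: $\theta^{(j)}_{s_j}$ need not be bounded when $d_j=\infty$, but you need no boundedness, since the splitting is a pointwise identity of functions.
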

\begin{proof}
   With respect to the orthogonal decomposition $H^2(z_i)=K_i\oplus\eta_i H^2(z_i)$, we have
   \begin{equation}\label{eq:model-split}
       z_i\theta^{(i)}_{s_i}=A_i\theta^{(i)}_{s_i}+\langle z_i\theta^{(i)}_{s_i},\,\eta_i\rangle\,\eta_i .
   \end{equation}
   This decomposition is valid, because $A_i\theta_{s_i}^{(i)}\in K_i$ and the second term is exactly the projection of $z_i\theta_{s_i}^{(i)}$ onto $\eta_i H^2(z_i)$. Indeed, for $\theta\in K_i$, the function $\bar{\eta}_i z_i\theta$ is orthogonal to every monomial $z^n$ with $n\ge 1$ (as $\theta\perp z_i\eta_i H^2$), so its projection onto $H^2$ is the constant $\langle z_i\theta,\eta_i\rangle$.

   By ~\eqref{eq:model-split}, we have
   \[z_i E_{s,l}=(A_i\theta^{(i)}_{s_i})\prod_{j\neq i}\theta^{(j)}_{s_j}\,e_l^{(d)}(\eta)+\langle z_i\theta^{(i)}_{s_i},\,\eta_i\rangle\,\eta_i{\prod_{j\neq i}\theta_{s_j}^{(j)}\,e_l^{(d)}(\eta)}.\]
\end{proof}

Equation \eqref{eq:Sz-basis} shows that $S_{z_i}$ does not change the indices $s_j$ for $j\neq i$, it is a key property. The factors and the $\eta$-sum does not change except the $i$-th factor. For the $i$-th factor, it is either $A_i\theta_{s_i}^{(i)}$, which is an element of $K_i$, or a constant multiple of $\eta_i$. The constant multiple of $\eta_i$ is absorbed by $S_{\eta_i}$ later. This observation motivates the following layer decomposition.

\subsection{Layer decomposition and the unitary model}
For a fixed multi-index $s'=(s_j)_{j\neq i}$ with $0\le s_j<d_j$, set
\[N_{s'}:=\overline{\operatorname{span}\bigl\{E_{s,l}:\ 0\le s_i<d_i,\ l\ge 0\bigr\}},\]
where $s=(s_1,\dots,s_d)$ has its $j$-th entries ($j\neq i$) prescribed by $s'$. Hence each $N_{s'}$ \emph{reduces} $S_{z_i}$, and we have the orthogonal decomposition
\[N_\eta=\bigoplus_{s'} N_{s'},\qquad [S_{z_i}^*,S_{z_i}]=\bigoplus_{s'}\bigl[(S_{z_i}|_{N_{s'}})^*,\,S_{z_i}|_{N_{s'}}\bigr].\]
The Hilbert--Schmidt norm splits over the layers:
\begin{equation}\label{eq:layer-split}
    \HSnorm{[S_{z_i}^*,S_{z_i}]}^2     =\sum_{s'}\HSnorm{\bigl[(S_{z_i}|_{N_{s'}})^*,\,S_{z_i}|_{N_{s'}}\bigr]}^2 .
\end{equation}

We now build a unitary model for the restriction of $S_{z_i}$ to each layer $N_{s'}$. Recall the weighted shift $B_d$ on $N_0$ from Subsection \ref{subsec:shift}. Define
\[U_{s'}:N_{s'}\longrightarrow K_i\otimes N^{(d)}_0,\qquad U_{s'}E_{s,l}=\theta^{(i)}_{s_i}(z_i)\otimes e_l^{(d)} .\]
Both $\{E_{s,l}\}_{s_i,l}$ and $\{\theta^{(i)}_{s_i}\otimes e_l^{(d)}\}_{s_i,l}$ are orthonormal bases of $N_{s'}$ and $K_i\otimes N_0$, respectively, and $U_{s'}$ maps one onto the other; hence $U_{s'}$ is unitary.

By Lemma \ref{lem:Seta}, $S_{\eta_i}$ acts as $I\otimes B_d$ under this identification,
\begin{equation}\label{eq:Seta-model}
   U_{s'}\,S_{\eta_i}\big|_{N_{s'}}\,U_{s'}^*=I\otimes B_d .
\end{equation}
Indeed,
\[U_{s'}S_{\eta_i}E_{s,l}
=U_{s'}\Bigl(\sqrt{\frac{l+1}{l+d}}\,E_{s,l+1}\Bigr)
=\sqrt{\frac{l+1}{l+d}}\;\theta^{(i)}_{s_i}\otimes e_{l+1}^{(d)}
=(I\otimes B_d)\,U_{s'}E_{s,l}.\]

\begin{proposition}\label{prop:model}
   For each $s'$,
   \begin{equation}\label{eq:Sz-model}
      U_{s'}\,S_{z_i}\big|_{N_{s'}}\,U_{s'}^*=A_i\otimes I+T_i\otimes B_d.
\end{equation}
\end{proposition}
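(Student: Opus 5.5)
The plan is to check \eqref{eq:Sz-model} on the orthonormal basis $\{\theta^{(i)}_{s_i}\otimes e_l^{(d)}\}$ of $K_i\otimes N_0$. Equivalently, we show that for every $E_{s,l}\in N_{s'}$,
\[
U_{s'}S_{z_i}E_{s,l}=A_i\theta^{(i)}_{s_i}\otimes e_l^{(d)}+T_i\theta^{(i)}_{s_i}\otimes B_d e_l^{(d)}.
\]
Both sides are bounded, so agreement on a basis gives the operator identity.

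The starting point is Proposition \ref{prop:Sz-action}. It decomposes $z_iE_{s,l}$ into two pieces, and $S_{z_i}E_{s,l}=P_{N_\eta}(z_iE_{s,l})$. We project each piece onto $N_\eta$ separately.

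For the first piece, $(A_i\theta^{(i)}_{s_i})\prod_{j\ne i}\theta^{(j)}_{s_j}\,e_l^{(d)}(\eta)$, expand $A_i\theta^{(i)}_{s_i}=\sum_t a_t\theta^{(i)}_t$ in the orthonormal basis of $K_i$.
\begin{itemize}
\item The map $f\mapsto f\cdot\prod_{j\neq i}\theta^{(j)}_{s_j}\,e_l^{(d)}(\eta)$ from $K_i$ sends $\theta^{(i)}_t$ to $E_{(t,s'),l}$. It is therefore isometric on $K_i$, and the series converges even when $d_i=\infty$.
\item Hence the first piece equals $\sum_t a_tE_{(t,s'),l}$, which already lies in $N_{s'}\subset N_\eta$, so $P_{N_\eta}$ fixes it.
\item Under $U_{s'}$ it becomes $\sum_t a_t\,\theta^{(i)}_t\otimes e_l^{(d)}=A_i\theta^{(i)}_{s_i}\otimes e_l^{(d)}$.
\end{itemize}

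For the second piece, write $c=\langle z_i\theta^{(i)}_{s_i},\eta_i\rangle$.
\begin{itemize}
\item Since $|z_i|=1$ on the circle, $c=\langle\theta^{(i)}_{s_i},\bar z_i\eta_i\rangle$. By the definition of $T_i$, this gives $T_i\theta^{(i)}_{s_i}=c\cdot 1$.
\item Because $\theta^{(i)}_0=1$, the function $\prod_{j\ne i}\theta^{(j)}_{s_j}\,e_l^{(d)}(\eta)$ is exactly $E_{(0,s'),l}$. So the second piece is $c\,\eta_iE_{(0,s'),l}$.
\item Its projection is $c\,S_{\eta_i}E_{(0,s'),l}$. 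By Lemma \ref{lem:Seta} (equivalently \eqref{eq:Seta-model}), $U_{s'}$ carries this to $c\,(1\otimes B_de_l^{(d)})=T_i\theta^{(i)}_{s_i}\otimes B_de_l^{(d)}$.
\end{itemize}
Adding the two contributions gives the displayed identity.

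The step I expect to need the most care is this second piece. One has to notice that the "absorbed by $S_{\eta_i}$" term must be rewritten through the $s_i=0$ basis vector: the $\theta^{(i)}_{s_i}$ factor is replaced by the constant $1$. The coefficient $c$ must then be matched precisely with $T_i$, including the conjugation convention in $\langle z_i\theta,\eta_i\rangle=\langle\theta,\bar z_i\eta_i\rangle$.

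Along the way, the computation shows that $S_{z_i}$ maps $N_{s'}$ into itself. The same argument applied to the adjoint model $A_i^*\otimes I+T_i^*\otimes B_d^*$, or simply the fact that $\{N_{s'}\}$ is an orthogonal decomposition of $N_\eta$ with each piece invariant, gives the reducing property used in \eqref{eq:layer-split}.
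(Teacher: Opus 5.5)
Your proposal is correct and follows essentially the same route as the paper: you verify the identity on the basis $\theta^{(i)}_{s_i}\otimes e_l^{(d)}$ starting from Proposition~\ref{prop:Sz-action}, expand $A_i\theta^{(i)}_{s_i}$ in the basis of $K_i$ for the first term, and match the $\eta_i$-term with $T_i\otimes B_d$ via $T_i\theta^{(i)}_{s_i}=\langle z_i\theta^{(i)}_{s_i},\eta_i\rangle\,1$ and Lemma~\ref{lem:Seta}. You are more careful than the paper on one point: you make the projection $P_{N_\eta}$ explicit by rewriting the second term as $c\,\eta_iE_{(0,s'),l}$, so that its compression is exactly $c\,S_{\eta_i}E_{(0,s'),l}$, whereas the paper leaves this step implicit.
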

\begin{proof}
   Apply $U_{s'}$ to both sides of~\eqref{eq:Sz-basis}. For the first term, expanding $A_i\theta_{s_i}^{(i)}$ in the basis of $K_i$ as in the proof of Proposition~\ref{prop:Sz-action} and using the definition of $U_{s'}$,
   \[U_{s'}\bigl((A_i\theta_{s_i}^{(i)})\prod_{j\neq i}\theta_{s_j}^{(j)}\,e_l^{(d)}(\eta)\bigr)=A_i\theta_{s_i}^{(i)}\otimes e_l^{(d)}=(A_i\otimes I)\,U_{s'}E_{s,l}.\]
   For the second term, since $T_i\theta_{s_i}^{(i)}=\langle\theta_{s_i}^{(i)},\bar z_i\eta_i\rangle\,1=\langle z_i\theta_{s_i}^{(i)},\eta_i\rangle\,1$,
   \[U_{s'}\bigl(\langle z_i\theta_{s_i}^{(i)},\eta_i\rangle\,\eta_i{\prod_{j\neq i}\theta_{s_j}^{(j)}\,e_l^{(d)}(\eta)}\bigr)=\langle z_i\theta_{s_i}^{(i)},\eta_i\rangle\,\sqrt{\frac{l+1}{l+d}} (I\otimes e_{l+1}^{(d)})=(T_i\theta_{s_i}^{(i)}) (I\otimes B_d e_l^{(d)})\]
   Summing the two contributions gives~\eqref{eq:Sz-model}.
\end{proof}

\subsection{Characterization of the self-commutator}
\begin{proposition}\label{prop:commutator}
   For each $s'$,
   \begin{equation}\label{eq:comm-model}
       U_{s'}\,\bigl[S_{z_i}^*,S_{z_i}\bigr]\big|_{N_{s'}}\,U_{s'}^*=T_iT_i^*\otimes(I-B_dB_d^*)-T_i^*T_i\otimes(I-B_d^*B_d).
   \end{equation}
\end{proposition}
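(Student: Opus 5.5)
Since each $N_{s'}$ reduces $S_{z_i}$ and $U_{s'}$ is unitary, conjugating the commutator by $U_{s'}$ is the same as taking the commutator of the conjugated operators. By Proposition~\ref{prop:model} we may therefore work with
\[X=A_i\otimes I+T_i\otimes B_d\]
on $K_i\otimes N_0$ and compute $[X^*,X]=X^*X-XX^*$ directly. Expanding both products gives
\[X^*X=A_i^*A_i\otimes I+A_i^*T_i\otimes B_d+T_i^*A_i\otimes B_d^*+T_i^*T_i\otimes B_d^*B_d,\]
\[XX^*=A_iA_i^*\otimes I+A_iT_i^*\otimes B_d^*+T_iA_i^*\otimes B_d+T_iT_i^*\otimes B_dB_d^*.\]

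The first step is to kill the cross terms. We already noted that $A_i^*T_i=0$, since $\operatorname{ran}T_i=\mathbb{C}\cdot 1$ and $A_i^*1=0$. Taking adjoints gives $T_i^*A_i=0$.

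For the other two cross terms we need $T_iA_i^*=0$, i.e.\ $\langle A_i^*f,\bar z_i\eta_i\rangle=\langle f,A_i(\bar z_i\eta_i)\rangle=0$. This reduces to showing $A_i(\bar z_i\eta_i)=P_{K_i}\eta_i=0$, which holds because $\eta_i\in\eta_iH^2(z_i)$. Hence $T_iA_i^*=0$, and by adjoints $A_iT_i^*=0$.

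With the cross terms gone,
\[[X^*,X]=(A_i^*A_i-A_iA_i^*)\otimes I+T_i^*T_i\otimes B_d^*B_d-T_iT_i^*\otimes B_dB_d^*.\]
Next I substitute the defect identities \eqref{eq:defect-model}, namely $A_i^*A_i=I-T_i^*T_i$ and $A_iA_i^*=I-T_iT_i^*$. This gives $A_i^*A_i-A_iA_i^*=T_iT_i^*-T_i^*T_i$, and regrouping yields
\[[X^*,X]=T_iT_i^*\otimes(I-B_dB_d^*)-T_i^*T_i\otimes(I-B_d^*B_d),\]
which is \eqref{eq:comm-model}.

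The only step needing care is verifying the four vanishing products. In particular, $T_iA_i^*=0$ rests on $P_{K_i}\eta_i=0$ together with the identification $A_i(\bar z_i\eta_i)=P_{K_i}(z_i\cdot\bar z_i\eta_i)$. The remaining manipulations are bookkeeping of tensor products. One should also note that \eqref{eq:defect-model} is used exactly as stated earlier, so no separate check is needed there.
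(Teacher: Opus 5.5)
Your proposal is correct and follows the paper's proof essentially line for line. You pass to the model $A_i\otimes I+T_i\otimes B_d$ from Proposition~\ref{prop:model}, kill the four cross terms via $A_i^*1=0$ and $A_i(\bar z_i\eta_i)=P_{K_i}\eta_i=0$ together with their adjoints, and then apply the defect identities \eqref{eq:defect-model}; your explicit remark that the reducing property of $N_{s'}$ lets you conjugate the commutator term by term is a small point the paper leaves implicit.
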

\begin{proof}
    Set $S=A_i\otimes I+T_i\otimes B_d$. On $K_i$ we have
    \begin{equation}\label{eq:cross-vanish}
       A_i^*T_i=0,\qquad T_iA_i^*=0,
    \end{equation}
    and consequently $T_i^*A_i=0$ and $A_iT_i^*=0$. Indeed, $\operatorname{ran}T_i=\mathbb C\cdot 1$ and the backward shift formula on a model space gives $A_i^*1=0$, so $A_i^*T_i=0$; likewise $T_iA_i^*f=\langle A_i^*f,\bar z_i\eta_i\rangle 1=\langle f,A_i\bar z_i\eta_i\rangle 1$ and $A_i(\bar z_i\eta_i)=P_{K_i}(\eta_i)=0$ because $\eta_i\in\eta_i H^2\perp K_i$.

    Expanding $S^*S$ and $SS^*$,
    \[\begin{aligned}
       S^*S &=(A_i^*\otimes I+T_i^*\otimes B_d^*)(A_i\otimes I+T_i\otimes B_d)\\
       &=A_i^*A_i\otimes I+A_i^*T_i\otimes B_d+T_i^*A_i\otimes B_d^*+T_i^*T_i\otimes B_d^*B_d,\\[4pt]
       SS^* &=(A_i\otimes I+T_i\otimes B_d)(A_i^*\otimes I+T_i^*\otimes B_d^*)\\
       &=A_iA_i^*\otimes I+A_iT_i^*\otimes B_d^*+T_iA_i^*\otimes B_d+T_iT_i^*\otimes B_dB_d^*.
    \end{aligned}\]
    By \eqref{eq:cross-vanish} the cross terms vanish, so
    \[S^*S=A_i^*A_i\otimes I+T_i^*T_i\otimes B_d^*B_d,\qquad SS^*=A_iA_i^*\otimes I+T_iT_i^*\otimes B_dB_d^* .\]

    Subtracting and using $A_i^*A_i-A_iA_i^*=T_iT_i^*-T_i^*T_i$ from \eqref{eq:defect-model} gives
    \[[S^*,S]=T_iT_i^*\otimes(I-B_dB_d^*)-T_i^*T_i\otimes(I-B_d^*B_d),\]
\end{proof}

\subsection{The main theorem}
\begin{theorem}\label{thm:main}
    Let $N_\eta=H^2(\mathbb D^d)\ominus M_\eta$ with $d\ge2$, where $\eta_1,\dots,\eta_d$ are nonconstant one-variable inner functions with $\eta_j(0)=0$, and fix $1\le i\le d$. If $D_i=\prod_{j\ne i}d_j<\infty$, then
    \begin{equation}\label{eq:main}
       \HSnorm{[S_{z_i}^*,S_{z_i}]}^2=D_i(d-1)^2\left[\frac{\pi^2}{3}-2\sum_{k=1}^{d-1}\frac{1}{k^2}\right]+D_i-2D_i(d-1)\,|\eta_i'(0)|^2 .
    \end{equation}
    If $D_i=\infty$, then $[S_{z_i}^*,S_{z_i}]$ is not Hilbert-Schmidt.
\end{theorem}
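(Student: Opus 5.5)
The plan is to compute the Hilbert–Schmidt norm of the model operator in Proposition~\ref{prop:commutator} on a single layer, and then sum over the layers using \eqref{eq:layer-split}. Each layer $N_{s'}$ gives a unitarily equivalent copy of the same operator, because the right side of \eqref{eq:comm-model} does not depend on $s'$. The number of layers is the number of admissible $s'=(s_j)_{j\ne i}$, which is $D_i$. So it suffices to show that
\[
X:=T_iT_i^*\otimes(I-B_dB_d^*)-T_i^*T_i\otimes(I-B_d^*B_d)
\]
is Hilbert–Schmidt on $K_i\otimes N_0$ with $\HSnorm{X}^2$ equal to the bracketed per-layer quantity. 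We also need this quantity to be strictly positive: then $D_i=\infty$ gives an infinite sum of equal positive terms, so the commutator is not Hilbert–Schmidt.

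\textbf{Identify the two rank-one factors.} Put $v=\bar z_i\eta_i\in K_i$, a unit vector, so $T_if=\langle f,v\rangle 1$. Then $T_iT_i^*=P_1$, the orthogonal projection onto $\mathbb C\cdot 1$, and $T_i^*T_i=P_v$, the projection onto $\mathbb C v$. This holds whether $d_i$ is finite or infinite, since both factors have rank one. Also
\[
\langle 1,v\rangle=\overline{v(0)}=\overline{\eta_i'(0)},\qquad\text{so}\qquad \operatorname{tr}(P_1P_v)=|\eta_i'(0)|^2 .
\]
Write $a=I-B_dB_d^*$ and $b=I-B_d^*B_d$. By Lemma~\ref{lem:Bd} these are diagonal with entries $\frac{d-1}{l+d-1}$ and $\frac{d-1}{l+d}$ respectively.

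\textbf{Expand the Hilbert–Schmidt norm.} Since all factors are self-adjoint,
\[
\HSnorm{X}^2=\operatorname{tr}(X^2)=\operatorname{tr}(P_1)\operatorname{tr}(a^2)+\operatorname{tr}(P_v)\operatorname{tr}(b^2)-2\operatorname{tr}(P_1P_v)\operatorname{tr}(ab).
\]
Each trace is a convergent positive series, so all manipulations are justified. Evaluating the series:
\[
\operatorname{tr}(a^2)=(d-1)^2\Bigl(\tfrac{\pi^2}{6}-\sum_{k=1}^{d-2}\tfrac1{k^2}\Bigr),\qquad
\operatorname{tr}(b^2)=(d-1)^2\Bigl(\tfrac{\pi^2}{6}-\sum_{k=1}^{d-1}\tfrac1{k^2}\Bigr).
\]
Their sum equals $(d-1)^2\bigl[\frac{\pi^2}{3}-2\sum_{k=1}^{d-1}\frac1{k^2}\bigr]+1$, since $\operatorname{tr}(a^2)$ carries the extra term $(d-1)^2/(d-1)^2=1$. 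The cross term telescopes:
\[
\operatorname{tr}(ab)=(d-1)^2\sum_{l\ge0}\frac{1}{(l+d-1)(l+d)}=d-1 .
\]
This gives exactly the per-layer value in \eqref{eq:main}; multiplying by $D_i$ gives the formula.

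\textbf{Main obstacle: positivity in the case $D_i=\infty$.} We must check that $X\neq0$. If $v\notin\mathbb C\cdot1$, apply $X$ to $1\otimes e_0^{(d)}$ and take the component along the part of $1$ orthogonal to $v$; this component is nonzero. If $v$ is a multiple of $1$, then $\eta_i$ is a unimodular multiple of $z_i$, so $P_1=P_v$ and $X=P_1\otimes(a-b)$. This is nonzero because $a\neq b$ on $e_0^{(d)}$ (the entries are $1$ and $\frac{d-1}{d}$). Hence each layer contributes a fixed positive amount, and infinitely many layers force $\HSnorm{[S_{z_i}^*,S_{z_i}]}=\infty$.
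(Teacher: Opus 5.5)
Your proposal is correct and follows the paper's proof: you reduce via the layer model of Proposition~\ref{prop:commutator}, compute the Hilbert--Schmidt norm of $X=T_iT_i^*\otimes(I-B_dB_d^*)-T_i^*T_i\otimes(I-B_d^*B_d)$ on one layer, and multiply by the $D_i$ identical layers. The differences are only in bookkeeping. You evaluate $\operatorname{tr}(X^2)$ via tensor-trace identities with $T_iT_i^*=P_1$ and $T_i^*T_i=P_v$, while the paper sums $\|XE_{s,l}\|^2$ over the basis using Parseval. For $D_i=\infty$ you get positivity from $X\neq 0$ by a case check, while the paper rewrites each summand as $(d-1)^2\bigl[(1/a-1/b)^2+2(1-|\eta_i'(0)|^2)/(ab)\bigr]>0$.
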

\begin{proof}
For a fix layer $s'$ and write $T=T_i$, $B=B_d$, $\theta=\theta^{(i)}_{s_i}$. Since $Tf=\langle f,\bar z_i\eta_i\rangle\,1$ and $T^*f=f(0)\,\bar z_i\eta_i$ with $\|\bar z_i\eta_i\|=1$,
\[
TT^*\theta=\theta(0)\cdot 1,
\qquad
T^*T\theta
=\langle z_i\theta,\,\eta_i\rangle\,\bar z_i\eta_i .
\]
By Proposition~\ref{prop:commutator} and Lemma~\ref{lem:Bd},
\[
U_{s'}[S_{z_i}^*,S_{z_i}]U_{s'}^*
(\theta\otimes e_l^{(d)})
=\frac{d-1}{l+d-1}\,\theta(0)\,(1\otimes e_l^{(d)})
-\frac{d-1}{l+d}\,\langle z_i\theta,\eta_i\rangle\,
(\bar z_i\eta_i\otimes e_l^{(d)}).
\]
The vectors $1\otimes e_l^{(d)}$ and $\bar z_i\eta_i\otimes e_l^{(d)}$ lie in $K_i\otimes\operatorname{span}\{e_l^{(d)}\}$ and are generally not orthogonal. Their inner product is
\[
\langle 1\otimes e_l^{(d)},\ \bar z_i\eta_i\otimes e_l^{(d)}\rangle
=\langle 1,\bar z_i\eta_i\rangle_{K_i}\,
\|e_l^{(d)}\|^2
=\overline{\eta_i'(0)},
\]
since $\eta_i(0)=0$ implies $\langle 1,\bar z_i\eta_i\rangle=(\bar z_i\eta_i)(0)=\overline{\eta_i'(0)}$. Expanding the squared norm,
\begin{align*}
\bigl\lVert[S_{z_i}^*,S_{z_i}]E_{s,l}\bigr\rVert^2
={}&\frac{(d-1)^2}{(l+d-1)^2}\,|\theta(0)|^2
+\frac{(d-1)^2}{(l+d)^2}\,\bigl|\langle z_i\theta,\eta_i\rangle\bigr|^2\\
&-\frac{2(d-1)^2}{(l+d-1)(l+d)}\,
\operatorname{Re}\Bigl(\theta(0)\,
\overline{\langle z_i\theta,\eta_i\rangle}\;\overline{\eta_i'(0)}\Bigr).
\end{align*}
Summing over $s_i=0,\dots,d_i-1$ and using that $\{\theta^{(i)}_{s_i}\}$ is an orthonormal basis of $K_i$ containing unit vectors $1,\bar z_i\eta_i$, Parseval's identity yields
\[
\sum_{s_i}|\theta^{(i)}_{s_i}(0)|^2
=\sum_{s_i}|\langle\theta^{(i)}_{s_i},1\rangle|^2=1,
\qquad
\sum_{s_i}|\langle z_i\theta^{(i)}_{s_i},\eta_i\rangle|^2
=\sum_{s_i}|\langle\theta^{(i)}_{s_i},\bar z_i\eta_i\rangle|^2=1,
\]
and for the cross term, we have
\[
\sum_{s_i}\theta^{(i)}_{s_i}(0)\,
\overline{\langle z_i\theta^{(i)}_{s_i},\eta_i\rangle}
=\sum_{s_i}
\langle\bar z_i\eta_i,\ \theta^{(i)}_{s_i}\rangle
\langle\theta^{(i)}_{s_i},\ 1\rangle
=\langle\bar z_i\eta_i,\,1\rangle
=\eta_i'(0).
\]
Consequently,
\begin{equation}\label{eq:si-sum}
\sum_{s_i=0}^{d_i-1}
\bigl\lVert[S_{z_i}^*,S_{z_i}]E_{s,l}\bigr\rVert^2
=\frac{(d-1)^2}{(l+d-1)^2}
+\frac{(d-1)^2}{(l+d)^2}
-\frac{2(d-1)^2\,|\eta_i'(0)|^2}{(l+d-1)(l+d)} .
\end{equation}
The right-hand side is independent of~$s'$.

Now sum over $l\ge0$, we have 
\[
\sum_{l=0}^{\infty}\frac{1}{(l+d-1)^2}
=\frac{\pi^2}{6}-\sum_{k=1}^{d-2}\frac{1}{k^2},
\qquad
\sum_{l=0}^{\infty}\frac{1}{(l+d)^2}
=\frac{\pi^2}{6}-\sum_{k=1}^{d-1}\frac{1}{k^2},
\]
\[
\sum_{l=0}^{\infty}\frac{1}{(l+d-1)(l+d)}
=\sum_{l=0}^{\infty}\Bigl(\frac{1}{l+d-1}-\frac{1}{l+d}\Bigr)
=\frac{1}{d-1}.
\]
Summing \eqref{eq:si-sum} over $l$ gives, for each layer $s'$,
\begin{align*}
\HSnorm{\bigl[(S_{z_i}|_{N_{s'}})^*,\,S_{z_i}|_{N_{s'}}\bigr]}^2
&=(d-1)^2\Bigl[\frac{\pi^2}{3}
-\sum_{k=1}^{d-2}\frac{1}{k^2}-\sum_{k=1}^{d-1}\frac{1}{k^2}\Bigr]
-\frac{2(d-1)^2|\eta_i'(0)|^2}{d-1}\\
&=(d-1)^2\Bigl[\frac{\pi^2}{3}-2\sum_{k=1}^{d-1}\frac{1}{k^2}\Bigr]
+1-2(d-1)\,|\eta_i'(0)|^2 ,
\end{align*}
by $\sum\limits_{k=1}^{d-2}k^{-2}=\sum\limits_{k=1}^{d-1}k^{-2}-(d-1)^{-2}$.

By the layer decomposition \eqref{eq:layer-split}, the Hilbert--Schmidt norm of $[S_{z_i}^*,S_{z_i}]$ is this common layer contribution multiplied by the number $D_i=\prod_{j\neq i}d_j$ of layers. If $D_i<\infty$, this yields \eqref{eq:main}. If $D_i=\infty$,  Write $a=l+d-1$, $b=l+d$, the summand in \eqref{eq:si-sum} is
\[
(d-1)^2\Bigl[\Bigl(\frac1a-\frac1b\Bigr)^2
+\frac{2(1-|\eta_i'(0)|^2)}{ab}\Bigr]>0,
\]
since $a\neq b$ and $|\eta_i'(0)|\le1$ by the Schwarz lemma. Hence the sum in \eqref{eq:layer-split} diverges, and $[S_{z_i}^*,S_{z_i}]$ is not Hilbert--Schmidt.
\end{proof}

\begin{remark}
For $d=2$, let $j$ be the index different from $i$. Then $D_i=d_j$, $(d-1)^2=1$, and $\sum\limits_{k=1}^{1}k^{-2}=1$, so \eqref{eq:main} becomes
\[
\HSnorm{[S_{z_i}^*,S_{z_i}]}^2
=\frac{\pi^2}{3}\,D_i-D_i-2D_i\,|\eta_i'(0)|^2,
\]
which is just the formula in \textnormal{\cite{ZouYuYang}} for the case of bidisk.
\end{remark}

\begin{example}[Example: $d=3$ with finite Blaschke products]\label{rem:example}
To anchor the abstract formula in a concrete case, take $d=3$ and let
\[
\eta_1(z)=z^2,\quad \eta_2(z)=z^3,\quad \eta_3(z)=z.
\]
All three are finite Blaschke products vanishing at $0$. Their model space dimensions are $d_1=2$, $d_2=3$, $d_3=1$. For $i=1$, we have
\[
D_1=d_2d_3=3,\qquad |\eta_1'(0)|=0\;\;(\text{since }\eta_1(z)=z^2).
\]
With $d=3$, compute $\sum_{k=1}^{2}k^{-2}=1+\frac14=\frac54$. Then formula~\eqref{eq:main} gives
\[
\HSnorm{[S_{z_1}^*,S_{z_1}]}^2
=3\cdot 2^2\left[\frac{\pi^2}{3}-\frac52\right]+3-2\cdot3\cdot2\cdot0
=12\left(\frac{\pi^2}{3}-\frac52\right)+3
=4\pi^2-30+3=4\pi^2-27.
\]
For $i=3$, we have $D_3=d_1d_2=6$, $|\eta_3'(0)|=1$ (since $\eta_3(z)=z$), giving
\[
\HSnorm{[S_{z_3}^*,S_{z_3}]}^2
=6\cdot4\left[\frac{\pi^2}{3}-\frac52\right]+6-2\cdot6\cdot2\cdot1
=24\left(\frac{\pi^2}{3}-\frac52\right)-18
=8\pi^2-60-18=8\pi^2-78.
\]
Note that the commutator for $i=3$ ($\eta_3(z)=z$) is smaller than for $i=1$ ($\eta_1(z)=z^2$), illustrating the phenomenon that $|\eta_i'(0)|$ closer to $1$ reduces the commutator norm. The coefficient of $|\eta_i'(0)|^2$ in \eqref{eq:main} is negative. Hence the commutator norm decreases when $|\eta_i'(0)|$ increases.
\end{example}

\section{Cross-commutators and further remarks}

\subsection{The cross-commutators}

We now study the cross-commutators $[S_{z_i}^*,S_{z_j}]$ with $i\neq j$. Unlike the self-commutator case, they involve two distinct compressed operators and require a refinement of the layer decomposition.

Fix $i\neq j$ and set
\[
D_{ij}=\prod_{k\neq i,j}d_k\in\mathbb N\cup\{\infty\}.
\]

\subsubsection{Bi-layer decomposition}
For a multi-index $s''=(s_k)_{k\neq i,j}$ with $0\le s_k<d_k$, define
\[
N_{s''}=
\overline{\operatorname{span}}
\bigl\{
E_{s,l}:\ 0\le s_i<d_i,\ 0\le s_j<d_j,\ l\ge0
\bigr\},
\]
where $s$ has its $k$-th entries ($k\neq i,j$) prescribed by $s''$. By the same reasoning as in Section~3.2, the explicit formula~\eqref{eq:Sz-basis} for the action of $S_{z_i}$ (and the analogous formula for $S_{z_j}$) shows that these operators do not alter the indices $s_k$ for $k\neq i,j$. Hence each $N_{s''}$ reduces both $S_{z_i}$ and $S_{z_j}$, and
\begin{equation}\label{eq:bi-layer-split}
\HSnorm{[S_{z_i}^*,S_{z_j}]}^2
=\sum_{s''}\HSnorm{\bigl[(S_{z_i}|_{N_{s''}})^*,\,S_{z_j}|_{N_{s''}}\bigr]}^2 .
\end{equation}

Define a unitary map
\[
V_{s''}:N_{s''}\longrightarrow K_i\otimes K_j\otimes N^{(d)}_0,
\qquad
V_{s''}E_{s,l}=\theta^{(i)}_{s_i}(z_i)\otimes\theta^{(j)}_{s_j}(z_j)\otimes e_l^{(d)} .
\]

\subsubsection{Unitary model on the bi-layer}
We repeat the argument of Proposition~\ref{prop:model} on $N_{s''}$. Decomposing $z_i\theta^{(i)}_{s_i}$ as in \eqref{eq:model-split} and noting that $M_{z_i}$ leaves $\theta^{(j)}_{s_j}$ ($j\neq i$) unchanged, we obtain
\[
S_{z_i}E_{s,l}
=(A_i\theta^{(i)}_{s_i})\prod_{k\neq i}\theta^{(k)}_{s_k}\,e_l^{(d)}(\eta)
+\langle z_i\theta^{(i)}_{s_i},\eta_i\rangle\,\eta_i{\prod_{k\neq i}\theta_{s_k}^{(j)}\,e_l^{(d)}(\eta)}.
\]
Applying $V_{s''}$ and using $S_{\eta_i}\cong I\otimes I\otimes B_d$ (Lemma~\ref{lem:Seta}) together with $T_i\theta^{(i)}_{s_i}=\langle z_i\theta^{(i)}_{s_i},\eta_i\rangle\,1$ yields
\begin{equation}\label{eq:bi-model}
\begin{aligned}
V_{s''}\,S_{z_i}\big|_{N_{s''}}\,V_{s''}^*
&=A_i\otimes I\otimes I+T_i\otimes I\otimes B_d,\\
V_{s''}\,S_{z_j}\big|_{N_{s''}}\,V_{s''}^*
&=I\otimes A_j\otimes I+I\otimes T_j\otimes B_d .
\end{aligned}
\end{equation}
Both formulas involve the \emph{same} weighted shift $B_d$ (Lemma~\ref{lem:Seta} guarantees $S_{\eta_i}=S_{\eta_j}$), while the Jordan block and rank-one parts act on their respective $K$ factors.

\begin{proposition}\label{prop:cross-comm}
For each $s''$,
\[
V_{s''}\,\bigl[S_{z_i}^*,S_{z_j}\bigr]\big|_{N_{s''}}\,V_{s''}^*
=T_i^*\otimes T_j\otimes(B_d^*B_d-B_dB_d^*).
\]
\end{proposition}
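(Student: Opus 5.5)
Work entirely inside the bi-layer model \eqref{eq:bi-model}. Write $X=A_i\otimes I\otimes I+T_i\otimes I\otimes B_d$ and $Y=I\otimes A_j\otimes I+I\otimes T_j\otimes B_d$. Since $V_{s''}$ is unitary and $N_{s''}$ reduces both $S_{z_i}$ and $S_{z_j}$, it suffices to compute $[X^*,Y]=X^*Y-YX^*$ on $K_i\otimes K_j\otimes N_0$.

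First expand both products into four tensor terms each:
\[X^*Y=A_i^*\otimes A_j\otimes I+A_i^*\otimes T_j\otimes B_d+T_i^*\otimes A_j\otimes B_d^*+T_i^*\otimes T_j\otimes B_d^*B_d,\]
\[YX^*=A_i^*\otimes A_j\otimes I+T_i^*\otimes A_j\otimes B_d^*+A_i^*\otimes T_j\otimes B_d+T_i^*\otimes T_j\otimes B_dB_d^*.\]
The point is that the operators in the first slot act on $K_i$ and those in the second slot act on $K_j$. Operators on different tensor factors commute, so for instance $(A_i^*\otimes I\otimes I)(I\otimes T_j\otimes B_d)=A_i^*\otimes T_j\otimes B_d=(I\otimes T_j\otimes B_d)(A_i^*\otimes I\otimes I)$. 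Thus we do not even need the vanishing relations \eqref{eq:cross-vanish} here. The mixed products are exactly the same in both orders, except in the third slot, where the order of $B_d$ and $B_d^*$ matters only in the term carrying $T_i^*\otimes T_j$.

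Subtracting, the terms $A_i^*\otimes A_j\otimes I$, $A_i^*\otimes T_j\otimes B_d$ and $T_i^*\otimes A_j\otimes B_d^*$ cancel pairwise. What remains is $T_i^*\otimes T_j\otimes(B_d^*B_d-B_dB_d^*)$, which is the claim.

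The only step needing care is justifying \eqref{eq:bi-model} itself, namely that $S_{z_j}$ on $N_{s''}$ is modeled with the \emph{same} $B_d$ in the third slot. I would argue this exactly as in Proposition~\ref{prop:model}: apply Proposition~\ref{prop:Sz-action} with $j$ in place of $i$, then use Lemma~\ref{lem:Seta}, which shows $S_{\eta_i}=S_{\eta_j}$ both act as the weighted shift $E_{s,l}\mapsto\sqrt{(l+1)/(l+d)}\,E_{s,l+1}$. I expect no real obstacle beyond this bookkeeping.
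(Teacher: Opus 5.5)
Your proposal is correct and is essentially the paper's own proof: in the bi-layer model \eqref{eq:bi-model} you expand $X^*Y$ and $YX^*$ into four tensor terms each, note that $A_i^*\otimes A_j\otimes I$, $A_i^*\otimes T_j\otimes B_d$ and $T_i^*\otimes A_j\otimes B_d^*$ appear identically in both products because their nontrivial factors sit in different tensor slots, and are left with $T_i^*\otimes T_j\otimes(B_d^*B_d-B_dB_d^*)$. Your remark that \eqref{eq:cross-vanish} is not needed here is accurate (the paper does not use it at this step either), and justifying the common $B_d$ via Proposition~\ref{prop:Sz-action} and Lemma~\ref{lem:Seta} is how the paper sets up the model.
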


\begin{proof}
Abbreviate $S=S_{z_i}$, $T=S_{z_j}$ in the bi-layer model. Using \eqref{eq:bi-model},
\[
\begin{aligned}
S^*&=A_i^*\otimes I\otimes I+T_i^*\otimes I\otimes B_d^*,\\
T&=I\otimes A_j\otimes I+I\otimes T_j\otimes B_d .
\end{aligned}
\]
Expanding $[S^*,T]=S^*T-TS^*$,
\[
\begin{aligned}
S^*T&=(A_i^*\otimes I\otimes I+T_i^*\otimes I\otimes B_d^*)(I\otimes A_j\otimes I+I\otimes T_j\otimes B_d)\\
    &=A_i^*\otimes A_j\otimes I+A_i^*\otimes T_j\otimes B_d+T_i^*\otimes A_j\otimes B_d^*+T_i^*\otimes T_j\otimes B_d^*B_d,\\[4pt]
TS^*&=(I\otimes A_j\otimes I+I\otimes T_j\otimes B_d)(A_i^*\otimes I\otimes I+T_i^*\otimes I\otimes B_d^*)\\
    &=A_i^*\otimes A_j\otimes I+T_i^*\otimes A_j\otimes B_d^*+A_i^*\otimes T_j\otimes B_d+T_i^*\otimes T_j\otimes B_dB_d^*.
\end{aligned}
\]
Subtracting, the mixed terms cancel, leaving
\[
[S^*,T]=T_i^*\otimes T_j\otimes(B_d^*B_d-B_dB_d^*).
\]
\end{proof}

\subsubsection{ The Hilbert--Schmidt norm of cross commutators}
Taking the basis vector \(\theta_{s_i}^{(i)} \otimes \theta_{s_j}^{(j)} \otimes e_l \in K_i \otimes K_j \otimes N_0\), from the unitary equivalence formula for the cross-commutator, we obtain
\[
\begin{aligned}
&\quad V_{s''} [S_{z_i}^*, S_{z_j}] V_{s''}^*
\left( \theta_{s_i}^{(i)} \otimes \theta_{s_j}^{(j)} \otimes e_l \right) \\
&= \left( T_i^* \theta_{s_i}^{(i)} \right) \otimes \left( T_j \theta_{s_j}^{(j)} \right)
\otimes \left( (B_d^* B_d - B_d B_d^*) e_l \right).
\end{aligned}
\]
From Lemma~\ref{lem:Bd}, the defect operators give
\[
(B_d^*B_d-B_dB_d^*)\,e_l^{(d)}
=\Bigl(\frac{l+1}{l+d}-\frac{l}{l+d-1}\Bigr)e_l^{(d)}
=\frac{d-1}{(l+d)(l+d-1)}\,e_l^{(d)},
\qquad l\ge0.
\]
then
\[
\bigl\|B_d^*B_d-B_dB_d^*\bigr\|_{\HS}^2
=\sum_{l=0}^{\infty}\frac{(d-1)^2}{(l+d)^2(l+d-1)^2},
\]
and
\[
\begin{aligned}
\sum_{l=0}^{\infty}\frac{(d-1)^2}{(l+d)^2(l+d-1)^2}
&=(d-1)^2\sum_{l=0}^{\infty}
\Bigl(\frac{1}{(l+d-1)^2}+\frac{1}{(l+d)^2}-\frac{2}{(l+d-1)(l+d)}\Bigr)\\
&=(d-1)^2\Bigl[\frac{\pi^2}{3}
-\sum_{k=1}^{d-2}\frac{1}{k^2}-\sum_{k=1}^{d-1}\frac{1}{k^2}\Bigr]
-2(d-1),
\end{aligned}
\]

Summing over all $D_{ij}$ bi-layers via \eqref{eq:bi-layer-split} yields the following theorem.

\begin{theorem}\label{thm:cross}
Let $N_\eta$ be as in Theorem~\ref{thm:main} and fix $i\neq j$. If $D_{ij}=\prod_{k\neq i,j}d_k<\infty$, then
\begin{equation}\label{eq:cross-main}
\begin{aligned}
\HSnorm{[S_{z_i}^*,S_{z_j}]}^2
=D_{ij}(d-1)^2\Bigl[\frac{\pi^2}{3}
-\sum_{k=1}^{d-2}\frac{1}{k^2}
-\sum_{k=1}^{d-1}\frac{1}{k^2}\Bigr]
-2D_{ij}(d-1).
\end{aligned}
\end{equation}
If $D_{ij}=\infty$, then $[S_{z_i}^*,S_{z_j}]$ is not Hilbert--Schmidt.
\end{theorem}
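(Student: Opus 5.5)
The plan is to combine the bi-layer splitting \eqref{eq:bi-layer-split} with the model of Proposition~\ref{prop:cross-comm}, so that the Hilbert--Schmidt norm of each layer factors as a product of three one-factor norms. On a fixed bi-layer $N_{s''}$, $V_{s''}$ is unitary, so
\[
\HSnorm{\bigl[(S_{z_i}|_{N_{s''}})^*,S_{z_j}|_{N_{s''}}\bigr]}^2=\HSnorm{T_i^*\otimes T_j\otimes(B_d^*B_d-B_dB_d^*)}^2 .
\]
I will then use the multiplicativity of the Hilbert--Schmidt norm on tensor products. This follows by evaluating $\sum\|X\otimes Y\otimes Z\,(u_a\otimes v_b\otimes w_c)\|^2$ over the product orthonormal basis $\{\theta^{(i)}_{s_i}\otimes\theta^{(j)}_{s_j}\otimes e_l^{(d)}\}$, which factors as $\|X\|_{\HS}^2\|Y\|_{\HS}^2\|Z\|_{\HS}^2$. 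Equivalently, one can sum the explicit action computed just before the theorem over $s_i,s_j,l$.

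Next I evaluate the three factors. $T_i$ is the rank-one operator $f\mapsto\langle f,\bar z_i\eta_i\rangle 1$, so $\HSnorm{T_i}=\|1\|\,\|\bar z_i\eta_i\|=1$, and likewise $\HSnorm{T_i^*}=1$ and $\HSnorm{T_j}=1$. Concretely, Parseval gives $\sum_{s_i}\|T_i^*\theta^{(i)}_{s_i}\|^2=\sum_{s_i}|\theta^{(i)}_{s_i}(0)|^2=1$ and $\sum_{s_j}|\langle z_j\theta^{(j)}_{s_j},\eta_j\rangle|^2=1$, exactly as in the proof of Theorem~\ref{thm:main}. By Lemma~\ref{lem:Bd}, $B_d^*B_d-B_dB_d^*$ is diagonal with entries $\frac{d-1}{(l+d)(l+d-1)}$. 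Writing $\frac{1}{(l+d-1)(l+d)}=\frac{1}{l+d-1}-\frac1{l+d}$ and squaring, its squared Hilbert--Schmidt norm is the series already computed before the statement. That computation uses the tails $\sum_{l\ge0}(l+d-1)^{-2}=\frac{\pi^2}{6}-\sum_{k=1}^{d-2}k^{-2}$ and $\sum_{l\ge0}(l+d)^{-2}=\frac{\pi^2}{6}-\sum_{k=1}^{d-1}k^{-2}$, together with the telescoping sum $\sum_l\frac1{(l+d-1)(l+d)}=\frac1{d-1}$. This yields the value $c_d:=(d-1)^2\bigl[\frac{\pi^2}{3}-\sum_{k=1}^{d-2}k^{-2}-\sum_{k=1}^{d-1}k^{-2}\bigr]-2(d-1)$, which is independent of $s''$ and of the inner functions.

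Finally I sum over layers. The number of bi-layers is the number of multi-indices $s''=(s_k)_{k\ne i,j}$, which is $D_{ij}$. When $D_{ij}<\infty$, \eqref{eq:bi-layer-split} gives $D_{ij}c_d$, which is \eqref{eq:cross-main}. When $D_{ij}=\infty$, it suffices that $c_d>0$, since then the sum of infinitely many equal positive terms diverges. Positivity is immediate because $c_d=\sum_l\frac{(d-1)^2}{(l+d)^2(l+d-1)^2}$ has strictly positive terms, so $c_d\ge\frac{(d-1)^2}{d^2(d-1)^2}>0$.

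The only step needing care is the justification of \eqref{eq:bi-layer-split}, that is, that each $N_{s''}$ reduces both $S_{z_i}$ and $S_{z_j}$, so that the cross-commutator is block diagonal. Invariance follows from \eqref{eq:Sz-basis}. Reducing-ness follows because $\{N_{s''}\}$ is an orthogonal decomposition of $N_\eta$ into subspaces each invariant under $S_{z_i}$ and $S_{z_j}$, so each is also invariant under the adjoints. Beyond that, the proof is routine bookkeeping with the identity $\sum_{k=1}^{d-2}k^{-2}=\sum_{k=1}^{d-1}k^{-2}-(d-1)^{-2}$ to match the stated form.
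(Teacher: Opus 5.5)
Your proposal is correct and follows essentially the same route as the paper: the bi-layer splitting, the model $T_i^*\otimes T_j\otimes(B_d^*B_d-B_dB_d^*)$ from Proposition~\ref{prop:cross-comm}, $\HSnorm{T_i^*}=\HSnorm{T_j}=1$, the same telescoping/tail evaluation of $\sum_l \frac{(d-1)^2}{(l+d)^2(l+d-1)^2}$, and multiplication by the number $D_{ij}$ of bi-layers. Two points the paper leaves implicit are filled in by your argument: the explicit bound $c_d\ge 1/d^2>0$ for the case $D_{ij}=\infty$, and the orthogonal-decomposition argument showing that each $N_{s''}$ reduces $S_{z_i}$ and $S_{z_j}$.
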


\begin{remark}
Unlike the self-commutator, the cross-commutator norm is independent of the inner functions---it depends only on $d$, $D_{ij}$, and the defect dimensions. This reflects the fact that the rank-one operators $T_i^*$ and $T_j$ enter as a pure tensor product whose Hilbert-Schmidt norm carries no $\eta_i$-dependence.
\end{remark}

\begin{remark}
For $d=2$, the set $\{1,2\}\setminus\{i,j\}$ is empty, so $D_{ij}=1$. With $(d-1)^2=1$ and $\sum_{k=1}^{1}k^{-2}=1$, \eqref{eq:cross-main} reduces to
\[
\HSnorm{[S_{z_1}^*,S_{z_2}]}^2
=\frac{\pi^2}{3}-1-2=\frac{\pi^2}{3}-3,
\]
which is structurally parallel to the corresponding formula of \cite{ZouYuYang}.
\end{remark}

\subsection{Schatten class membership}

The explicit spectral decomposition obtained in the proof of Theorem~\ref{thm:main} allows us to sharpen the Hilbert--Schmidt conclusion.

\begin{corollary}\label{cor:schatten}
Assume $D_i<\infty$. Then for every $1\le i\le d$, the self-commutator $[S_{z_i}^*,S_{z_i}]$ belongs to the Schatten--von Neumann class $\mathcal{S}^p$ for all $p>1$. The cross commutator $[S_{z_i}^*,S_{z_j}]$ ($i\neq j$) when $D_{ij}<\infty$ belongs to the Schatten-von Neumann class $\mathcal{S}^p$ for all $p>\frac{1}{2}$.
\end{corollary}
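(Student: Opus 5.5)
The plan is to read off the singular values of both commutators directly from the unitary models of Propositions~\ref{prop:commutator} and~\ref{prop:cross-comm}. Both models are block diagonal with respect to the decomposition $\bigoplus_l K\otimes\operatorname{span}\{e_l^{(d)}\}$, and each block has rank at most two. I will then use two facts. First, the singular values of an orthogonal direct sum are the union, with multiplicity, of the singular values of the summands. Second, $\|X\|_{\mathcal S^p}^p=\sum_n s_n(X)^p$. Together with the layer decompositions \eqref{eq:layer-split} and \eqref{eq:bi-layer-split}, this gives
\[
\|[S_{z_i}^*,S_{z_i}]\|_{\mathcal S^p}^p=D_i\sum_{l\ge0}\|X_l\|_{\mathcal S^p}^p ,
\]
where $X_l$ is the $l$-th block of the model in a single layer. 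The cross-commutator satisfies the analogous identity with $D_{ij}$ in place of $D_i$.

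\textbf{Self-commutators.} By Proposition~\ref{prop:commutator} and Lemma~\ref{lem:Bd}, the $l$-th block is
\[
X_l=(d-1)\Bigl[\frac{1}{l+d-1}\,T_iT_i^*-\frac{1}{l+d}\,T_i^*T_i\Bigr]\otimes P_{e_l^{(d)}} .
\]
Here $T_iT_i^*$ is the rank-one projection onto $\mathbb C\cdot 1$, and $T_i^*T_i$ is the rank-one projection onto $\mathbb C\cdot\bar z_i\eta_i$. Both facts come from the formulas for $TT^*$ and $T^*T$ in the proof of Theorem~\ref{thm:main}, since $\|1\|=\|\bar z_i\eta_i\|=1$. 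Hence $X_l$ has rank at most $2$, and
\[
\|X_l\|\le (d-1)\Bigl(\frac{1}{l+d-1}+\frac1{l+d}\Bigr)\le \frac{2(d-1)}{l+d-1}.
\]
It follows that
\[
\|[S_{z_i}^*,S_{z_i}]\|_{\mathcal S^p}^p\le 2D_i\sum_{l\ge0}\Bigl(\frac{2(d-1)}{l+d-1}\Bigr)^p ,
\]
and the right-hand side is finite for every $p>1$ because $D_i<\infty$. The only point that needs care is that a layer may be infinite-dimensional when $d_i=\infty$. This causes no trouble, because the block $X_l$ is supported on the span of $1$ and $\bar z_i\eta_i$, which is at most two-dimensional, tensored with $e_l^{(d)}$.

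\textbf{Cross-commutators.} By Proposition~\ref{prop:cross-comm} and Lemma~\ref{lem:Bd}, on each bi-layer the operator is
\[
T_i^*\otimes T_j\otimes \operatorname{diag}\Bigl(\frac{d-1}{(l+d)(l+d-1)}\Bigr)_{l\ge0}.
\]
The factor $T_i^*\otimes T_j$ is a rank-one operator of norm $\|T_i\|\,\|T_j\|=1$. Indeed, $T_i f=\langle f,\bar z_i\eta_i\rangle 1$, so $\|T_i\|=\|\bar z_i\eta_i\|\,\|1\|=1$. The singular values on one bi-layer are therefore exactly $(d-1)/((l+d)(l+d-1))$ for $l\ge0$, each occurring once. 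Hence
\[
\|[S_{z_i}^*,S_{z_j}]\|_{\mathcal S^p}^p=D_{ij}\sum_{l\ge0}\Bigl(\frac{d-1}{(l+d)(l+d-1)}\Bigr)^p ,
\]
which behaves like $\sum_l l^{-2p}$ and so is finite exactly when $p>\tfrac12$. In particular the cross-commutator is trace class.

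I expect no step to be a genuine obstacle. The step that needs the most care is justifying that Schatten norms split over the infinite orthogonal sum. I will argue this by noting that the operator is unitarily equivalent to $\bigoplus_{s',l}X_l$, or to the corresponding sum over bi-layers. The sequence of singular values of such a direct sum is the rearranged union of the singular values of the blocks, so the $p$-th power sums add.
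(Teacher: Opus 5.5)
Your proposal is correct and follows the paper's route. Both arguments use the (bi-)layer decomposition to reduce to the fibers $K\otimes\operatorname{span}\{e_l^{(d)}\}$, show that each block has rank at most two (rank one for the cross-commutator) with singular values $O(l^{-1})$ (exactly $\frac{d-1}{(l+d)(l+d-1)}$ in the cross case), and then sum. The one difference is that you add $p$-th powers blockwise, $\|X\|_{\mathcal S^p}^p=D_i\sum_l\|X_l\|_{\mathcal S^p}^p$. This is a cleaner, fully rigorous replacement for the paper's informal rearrangement step $\sigma_K\approx C/L\approx C'/K$, and in the cross case it also gives the exact $\mathcal S^p$ norm.
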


\begin{proof}
From Proposition 3.3, under the layer decomposition we have
\[
U_{s'} [S_{z_i}^*, S_{z_i}] U_{s'}^*
=
T_i T_i^* \otimes (I - B_d B_d^*) - T_i^* T_i \otimes (I - B_d^* B_d).
\]

Lemma 2.4 gives
\[
(I - B_d B_d^*) e_l^{(d)}
=
\frac{d-1}{l+d-1} e_l^{(d)},
\qquad
(I - B_d^* B_d) e_l^{(d)}
=
\frac{d-1}{l+d} e_l^{(d)}.
\]

Thus, on the \(l\)-th fiber
\[
\mathcal F_l = \operatorname{span}\{\theta_{s_i}^{(i)} \otimes e_l^{(d)} : s_i\},
\]
the restriction is
\[
R_l
:=
[S_{z_i}^*, S_{z_i}] \big|_{\mathcal F_l}
=
\frac{d-1}{l+d-1} T_i T_i^* \otimes I
-
\frac{d-1}{l+d} T_i^* T_i \otimes I.
\]

Since both \(T_i T_i^*\) and \(T_i^* T_i\) are rank-one operators on \(K_i\), their linear combination \(R_l\) has rank at most two. Let \(\sigma_{l,1} \ge \sigma_{l,2} \ge 0\) be its nonzero singular values. From the Hilbert-Schmidt norm computation,
\[
\|R_l\|_{\mathrm{HS}}^2
=
\frac{(d-1)^2}{(l+d-1)^2}
+
\frac{(d-1)^2}{(l+d)^2}
-
\frac{2(d-1)^2|\eta_i'(0)|^2}{(l+d-1)(l+d)}
= O(l^{-2}).
\]
Hence
\[
\sigma_{l,1}^2 + \sigma_{l,2}^2 = O(l^{-2}),
\]
and therefore
\[
\sigma_{l,1} = O(l^{-1}), \qquad \sigma_{l,2} = O(l^{-1}).
\]

Let \((\sigma_k)_{k\ge1}\) be the global sequence of singular values. The number of layers is \(D_i = \prod_{j\neq i} d_j\), and each fiber \(\mathcal F_l\) contributes at most two singular values. Consequently, for \(l=0,1,\dots,L\), there are \(D_i(L+1)\) fibers, contributing at most \(2D_i(L+1)\) singular values. Hence for \(K \approx 2D_iL\),
\[
\sigma_K \approx \frac{C}{L} \approx \frac{C'}{K}.
\]
Thus the global singular values satisfy
\[
\sigma_k = O(k^{-1}).
\]

It follows that there exists a constant \(C>0\) such that for all \(k\ge1\),
\[
\sigma_k \le C k^{-1}.
\]
For any \(p > 1\),
\[
\sum_{k=1}^{\infty} \sigma_k^p
\le
C^p \sum_{k=1}^{\infty} k^{-p}
< \infty.
\]
Therefore
\[
[S_{z_i}^*, S_{z_i}] \in \mathcal S^p \quad (p > 1).
\]

For \(p = 1\), from \(\sigma_{l,1} \ge c/(l+1)\) for some \(c>0\) (which follows from the explicit formula for \(\|R_l\|_{\mathrm{HS}}^2\)), we obtain
\[
\sum_{k=1}^{\infty} \sigma_k
\ge
\sum_{l=0}^{\infty} \sigma_{l,1}
\ge
\sum_{l=0}^{\infty} \frac{c}{l+1}
= \infty.
\]
Thus
\[
[S_{z_i}^*, S_{z_i}] \notin \mathcal S^1.
\]

From Proposition 4.1, under the bi-layer decomposition we have
\[
V_{s''} [S_{z_i}^*, S_{z_j}] V_{s''}^*
=
T_i^* \otimes T_j \otimes (B_d^* B_d - B_d B_d^*).
\]

Lemma 2.4 gives
\[
(B_d^* B_d - B_d B_d^*) e_l^{(d)}
=
\frac{d-1}{(l+d)(l+d-1)} e_l^{(d)}.
\]

Thus, on the \(l\)-th fiber
\[
\mathcal F_l = \operatorname{span}\{\theta_{s_i}^{(i)} \otimes \theta_{s_j}^{(j)} \otimes e_l^{(d)} : s_i, s_j\},
\]
the restriction is
\[
\widetilde R_l
:=
[S_{z_i}^*, S_{z_j}] \big|_{\mathcal F_l}
=
\frac{d-1}{(l+d)(l+d-1)} \, T_i^* \otimes T_j \otimes I.
\]

Since both \(T_i^*\) and \(T_j\) are rank-one operators, their tensor product \(T_i^* \otimes T_j\) is also rank-one, hence \(\operatorname{rank}(\widetilde R_l)=1\). Let \(\sigma_l\) be its unique nonzero singular value. Because
\[
\|T_i^* \otimes T_j\|_{\mathrm{HS}} = \|T_i^*\|_{\mathrm{HS}} \|T_j\|_{\mathrm{HS}} = 1,
\]
we obtain
\[
\sigma_l = \frac{d-1}{(l+d)(l+d-1)}.
\]
Consequently, as \(l\to\infty\),
\[
\sigma_l = O(l^{-2}).
\]

Let \((\sigma_k)_{k\ge1}\) be the global sequence of singular values. The number of bi-layers is \(D_{ij} = \prod_{k\neq i,j} d_k\), and each fiber \(\mathcal F_l\) contributes exactly one singular value \(\sigma_l\). Therefore, for \(l=0,1,\dots,L\), there are \(D_{ij}(L+1)\) fibers, contributing \(D_{ij}(L+1)\) singular values. Hence for \(K\approx D_{ij}L\),
\[
\sigma_K \approx \frac{C}{L^2} \approx \frac{C'}{K^2}.
\]
Thus the global singular values satisfy
\[
\sigma_k = O(k^{-2}).
\]

It follows that there exists a constant \(C>0\) such that for all \(k\ge1\),
\[
\sigma_k \le C k^{-2}.
\]
Therefore, for any \(p>\frac{1}{2}\),
\[
\sum_{k=1}^{\infty} \sigma_k^p
\le
C^p \sum_{k=1}^{\infty} k^{-2p}
< \infty.
\]
Hence
\[
[S_{z_i}^*, S_{z_j}] \in \mathcal S^p \quad (p>\frac{1}{2}).
\]

In particular, taking \(p=1\) gives \([S_{z_i}^*, S_{z_j}]\in \mathcal S^1\), i.e., the cross-commutator is trace class.
\end{proof}

\section*{Acknowledgment} The second author was supported partly by Chongqing Science and Technology Commission (Grant No.CSTB2025NSCQ-LZX0075), Science and Technology Innovation Key R\&D Program of Chongqing and Scientific and Technological Research Program of Chongqing Municipal Education Commission (Grant No.KJZD-K202301109).

\end{document}